\numberwithin{equation}{section}
\title[   ]{ A strong convergence theorem for solving an equilibrium
problem and a fixed point problem using the Bregman distance}
 \author[Ghadampour, Soori, Agarwal and O$'$Regan]{Mostafa Ghadampour$^{1}$, Ebrahim Soori$^{*,2}$, Ravi P. Agarwal$^{3}$,  Donal O$'$Regan$^{4}$  $\,\,$ }
 \thanks{ \!\! \!\! \!\! \!\!* Corresponding author  \\2010 Mathematics Subject Classification: 47H09,47H10
  \\ E-mail addresses:  m.ghadampour@gmail.com(m. ghadampour), sori.e@lu.ac.ir (E. Soori),  agarwal@tamuk.edu(R. P. Agarwal), donal.oregan@nuigalway.ie(D. O$'$Regan). }
\theoremstyle{plain}
\newtheorem{prop}{\textbf{Proposition}}
\newtheorem{lem}{\textbf{Lemma}}[section]
\newtheorem{thm}[lem]{\textbf{Theorem}}
\newtheorem{ex}[lem]{\textbf{Example}}
\newtheorem{re}{\textbf{Remark}}
\theoremstyle{definition}
\theoremstyle{definition}
\theoremstyle{remark}
\begin{document}
\maketitle
\linespread{1}
\begin{center}
\begin{normalsize}
   $^{1}$ Department of Mathematics, Payame Noor University
P. O. Box 19395-3697, Tehran, Iran.\\
$^2$ Department of Mathematics, Lorestan University, Lorestan, Khoramabad,
Iran.\\
   $^{3}$ Department of Mathematics
 Texas A$\&$M University-Kingsville 700 University Blvd., MSC 172 Kingsville, Texas, USA.\\
 $^{4}$ School of Mathematics, Statistics, National University of Ireland, Galway, Ireland.\\
 \end{normalsize}
  \end{center}
\begin{abstract}
\begin{normalsize}
In this paper, using the Bregman distance,  we introduce a new projection-type algorithm for finding a common element of the set of solutions of an equilibrium problem and the set of fixed points. Then the strong convergence of the sequence generated by the algorithm will be established under  suitable conditions. Finally, using MATLAB software,  we present a numerical example to illustrate the convergence performance  of our algorithm.
\end{normalsize}
\end{abstract}
\begin{normalsize}
   \textbf{Keywords}: Variational inequality; Bregman nonexpansive mapping; Fixed point problem; Fr\'{e}chet differentiable; Asymptotical fixed point.
   \end{normalsize}

\section{ Introduction}
Let $C$ be a nonempty closed and convex subset of a reflexive real Banach space X. Throughout this paper, $X^*$ denotes the dual space of $X$. The norm and the duality pairing between
$X$ and $X^*$ are denoted by $\| . \|$ and $\langle ., .\rangle$, respectively. Now $\mathbb{R}$ stands for the set of real numbers. The equilibrium problem for a bifunction $g : C\times C \rightarrow \mathbb{R}$ satisfying the condition
$g(x, x) = 0$ for every $x \in C$ is stated as follows
 \begin{equation}\label{EP0}
     \text{Find}\;  y^*\in C\; \; \text{such}\; \text{that} \;\; g(x, y^*)\leq 0,
   \end{equation}
   for all $x\in C$. The set of solutions of \eqref{EP0} is denoted by $EP(g)$.

It is well known that variational inequalities arise in optimal control problems, optimization problems, fixed point problems, partial differential equations, engineering and equilibrium models and hence they have been formulated by many authors in recent years(see \cite{zhaox}, \cite{tdvdvt} ).

Tada and Takahashi \cite{tada} proposed the hybrid method for finding the
common element of the set of solutions of the monotone equilibrium problem \eqref{EP0} and a set of fixed points of a nonexpansive map represented in their algorithm.
Anh \cite{Anh} proposed the hybrid extragradient iteration method for finding a common element of the set of fixed points of a nonexpansive
mapping and the set of solutions of equilibrium problems for a pseudomonotone and Lipschitz-type continuous bifunction.
Eskandani et al.\cite{egrm} proposed a hybrid extragradient method and they introduced a new iterative process for approximating a common element of the set of solutions of equilibrium problems involving pseudomonotone bifunctions and the set of common fixed points of a finite family of multi-valued Bregman relatively nonexpansive mappings in Banach spaces. They proved that for any $x\in C$ the mapping $y \rightarrow
g(x, y) + D_f (x, y)$ has a unique minimizer where $g(x, .)$ is proper,  convex, lower semicontinuous and $D_f$ is the Bregman distance.
Also, Jolaoso et al. \cite{jlot} proved that under some suitable conditions, a point $x^*\in EP(g)$
if and only if $x^*$ solves the following minimization problem:
\begin{equation*}
 \min\{\lambda g(x, y) + D_f (x, y): y\in C\}.
\end{equation*}

In this paper, motivated by the work of Jolaoso et al. \cite{jlot}, we will present a new projection-type algorithm for approximating a common solution of a Bregman nonexpansive mapping which is a solution of \eqref{EP0}  in the setting of reflexive Banach spaces. Then using MATLAB software, the
main result will be illustrated with a numerical example.
\section{preliminaries}
We present some preliminaries and
lemmas which will be used in the next section. Let $f : X \rightarrow (-\infty, \infty]$ be an admissible function, i.e., a proper, convex and lower semicontinuous function. The domain of $f$ is the set $\{x \in X : f(x) < \infty\}$ denoted
by $\text{dom} f$. The set of minimizers of f is denoted by Argmin $f$ and its unique element by $argmin_{x\in X} f(x)$, if Argmin $f$ is a singleton.
Let $x \in \text{int}$ $\text{dom} f $, for any $y \in X$, the directional derivative of f at $x$ is defined by
\begin{equation}\label{fcir}
  f^{\circ}(x, y) = \displaystyle\lim_{t\rightarrow 0}\frac{f(x + ty) - f (x)}{t}.
\end{equation}
If the limit \eqref{fcir} as $t\rightarrow 0$ exists for each $y$, then $f$ is
said to be G\^{a}teaux differentiable at $x$. The function $f$ is said to be G\^{a}teaux differentiable if it is G\^{a}teaux differentiable for all $x \in \text{int}$ $\text{dom} f$. When the limit as $t \rightarrow 0$ in \eqref{fcir} is attained uniformly for any $y \in X$ with $\|y\| = 1$, we say that $f$ is Fr\'{e}chet differentiable at $x$. Finally, $f$ is said to be uniformly Fr\'{e}chet differentiable on a subset $C$ of $X$ if
the limit is attained uniformly at each $x \in C$ and $\| y \| = 1$.
In this case, the gradient of $f$ at $x$ is the linear
function $\nabla f(x)$ which is defined by $\langle y, \nabla f(x)\rangle := f^{\circ}(x, y)$ for all $y\in X$.

Let $x\in \text{int}$ $\text{dom} f$. The subdifferential of $f$ at $x$ is the convex set defined by
\begin{equation}\label{rond}
  \partial f(x) = \{l\in X^*: f(x) +\langle y-x, l\rangle \leq f(y), \;\forall y\in X\},
\end{equation}
where the Fenchel conjugate of $f$  is the convex function $f^*: X^* \rightarrow (-\infty, \infty]$ defined by
\begin{equation*}
  f^*(l)=\sup\{\langle l, x\rangle - f(x) :x\in X\}.
\end{equation*}
It is well known that by the Young-Fenchel inequality, if $\partial f(x)$ is nonempty, then we have
\begin{equation*}
  \langle l, x\rangle\leq f(x) + f^*(l), \;\; \forall x\in X, \; l\in X^*,
\end{equation*}
and also
\begin{equation*}
  f(x) + f^*(l) = \langle l, x\rangle \Leftrightarrow l\in \partial f(x).
\end{equation*}

Let $X$ be a reflexive Banach space. The function $f$ is Legendre if and only if
it satisfies the following two conditions:
\begin{enumerate}
  \item [(L1)] $\text{int}$ $\text{dom} f\neq \emptyset$, $f$ is G\^{a}teaux differentiable and dom $\nabla f=$ int dom $f$.
  \item [(L2)] $\text{int}$ $\text{dom} f^*\neq \emptyset$, $f^*$ is G\^{a}teaux differentiable and dom $\nabla f^*=$ int dom $f^*$.
\end{enumerate}
Since $X$ is a reflexive Banach space, $(\partial f)^{-1}=\partial f^*$ (see \cite[p 83]{bjfsa}). Also, we know that $\nabla f = (\nabla f^*)^{-1}$, this together with conditions
(L1) and (L2) imply that ran$\nabla f=$ dom $\nabla f^*=$ int dom $f^*$ and ran$\nabla f^*=$ dom $\nabla f=$ int dom $f$. In addition, if $X$ is reflexive, then $f$  is Legendre if and only if $f^*$ is Legendre (see \cite[corollary 5.5]{bhhbjmcpl}).

   Let $f : X \rightarrow (-\infty, \infty]$ be a G\^{a}teaux differentiable function. The
bifunction $D_f : dom f \times int\; dom f \rightarrow [0,+\infty]$ defined by
\begin{equation}\label{Df}
D_f (y, x) := f (y) - f (x) - \langle y - x, \nabla f(x) \rangle,
\end{equation}
is called the Bregman distance with respect to f (see \cite{rdrsdpa}).
The Bregman distance does not satisfy the well known properties of a metric. Clearly, $D_f (x, x) = 0$, but $D_f (y, x) = 0$ may not imply $x = y$, but when $f$ is Legendre this indeed holds (see \cite[Theorem 7.3(vi)]{bhhbjmcpl}).

The modulus of total convexity at $x \in \text{int} \;\text{dom}f$ is the function $\upsilon_f (x, .) :
[0,\infty) \rightarrow [0,\infty]$, defined by
\begin{equation*}
  \upsilon_f (x, t) :=\inf\{ D_f(y,x) : y\in \text{dom}f, \;\|y - x\|=t\}.
\end{equation*}
The function $f$ is called totally convex at $x \in \text{int} \;\text{dom} f$ if $\upsilon_f (x, t)$ is positive
for any $t > 0$ \cite{bdian}. Let $C$ be a nonempty subset of $X$. The modulus of total convexity of $f$ on
$C$ is defined by
\begin{equation*}
  \upsilon_f (C, t) =\inf\{ \upsilon_f (x, t) : x\in C\cap\text{int}\; \text{dom}f\}.
\end{equation*}
The function $f$ is called totally convex on bounded subsets if $\upsilon_f (C, t)$ is
positive for any nonempty and bounded subset $C$ and for any $t > 0$.

The following result establishes the characteristic continuity properties
for the derivative of a lower semicontinuous convex function.
\begin{prop}\label{singdf}\cite{bdian}
Let $f$ be a lower semicontinuous convex function with $\text{int}$ $\text{dom} f \neq \emptyset$. Then the function $f$ is differentiable at the point $x\in \text{int}$ $\text{dom} f$ if and only if
$\partial f(x)$ consists of a single element.
\end{prop}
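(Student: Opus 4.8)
The plan is to base everything on two standard facts about the one-sided directional derivative $d^{+}f(x,y):=\lim_{t\to 0^{+}}\frac{f(x+ty)-f(x)}{t}$ at a point $x\in\operatorname{int}\operatorname{dom}f$. First, this limit exists and is finite: the difference quotient $t\mapsto\frac{f(x+ty)-f(x)}{t}$ is nondecreasing on a right neighbourhood of $0$ by convexity, so the limit equals the infimum of the quotients and is bounded above, while comparing (again by convexity) the quotients on the two sides of $x$ gives a finite lower bound, all quotients being finite because $x$ is interior to $\operatorname{dom}f$. The map $y\mapsto d^{+}f(x,y)$ is then sublinear (positively homogeneous by construction, subadditive by convexity). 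Second, $l\in\partial f(x)$ if and only if $\langle y,l\rangle\le d^{+}f(x,y)$ for all $y\in X$: the forward implication follows by dividing $f(x+ty)-f(x)\ge t\langle y,l\rangle$ by $t>0$ and letting $t\to 0^{+}$, and the reverse from $f(z)-f(x)\ge d^{+}f(x,z-x)\ge\langle z-x,l\rangle$, the first inequality holding because the difference quotient at $t=1$ dominates its infimum. I will also use that, for convex $f$, G\^{a}teaux differentiability at $x$ in the sense of \eqref{fcir} is equivalent to $d^{+}f(x,\cdot)$ being a (necessarily bounded) linear functional, equivalently to $d^{+}f(x,y)+d^{+}f(x,-y)=0$ for every $y$; indeed sublinearity always gives ``$\ge 0$'' here, equality throughout forces additivity hence linearity, and the local Lipschitz continuity of $f$ on $\operatorname{int}\operatorname{dom}f$ makes such a functional bounded, so it equals $\nabla f(x)\in X^{*}$.

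The implication ``$f$ differentiable at $x\Rightarrow\partial f(x)$ a singleton'' is then immediate: if $f$ is G\^{a}teaux differentiable at $x$ then $d^{+}f(x,y)=\langle y,\nabla f(x)\rangle$, so any $l\in\partial f(x)$ satisfies $\langle y,l\rangle\le\langle y,\nabla f(x)\rangle$ for all $y$, and replacing $y$ by $-y$ gives $l=\nabla f(x)$; since $\partial f(x)$ is nonempty (a supporting hyperplane to the epigraph exists at the interior point $x$), it equals $\{\nabla f(x)\}$.

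For the converse, suppose $\partial f(x)=\{l\}$. The crux is the support-function representation $d^{+}f(x,y)=\max\{\langle y,l'\rangle:l'\in\partial f(x)\}$. The inequality ``$\ge$'' is the characterization of $\partial f(x)$ above. For attainment, fix $y_{0}\ne 0$ and consider the linear functional $\alpha y_{0}\mapsto\alpha\,d^{+}f(x,y_{0})$ on $\operatorname{span}\{y_{0}\}$; using positive homogeneity for $\alpha\ge 0$ and $d^{+}f(x,y_{0})+d^{+}f(x,-y_{0})\ge 0$ for $\alpha<0$, one checks it is dominated by the sublinear functional $d^{+}f(x,\cdot)$, so Hahn--Banach extends it to $l'\in X^{*}$ (continuity of $l'$ coming from $d^{+}f(x,\cdot)\le L\|\cdot\|$, a consequence of local Lipschitzness of $f$ at $x$) with $\langle y_{0},l'\rangle=d^{+}f(x,y_{0})$ and $\langle y,l'\rangle\le d^{+}f(x,y)$ for all $y$, i.e. $l'\in\partial f(x)$. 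As $\partial f(x)=\{l\}$, this gives $d^{+}f(x,y_{0})=\langle y_{0},l\rangle$; since $y_{0}$ was arbitrary, $d^{+}f(x,y)=\langle y,l\rangle$ for all $y$, in particular $d^{+}f(x,-y)=-d^{+}f(x,y)$, so the limit in \eqref{fcir} exists and equals $\langle y,l\rangle$. Hence $f$ is G\^{a}teaux differentiable at $x$ with $\nabla f(x)=l$.

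The main obstacle is the single functional-analytic ingredient hidden above: the Hahn--Banach extension must land inside $X^{*}$, which requires $d^{+}f(x,\cdot)$ to be continuous, and this follows from the nontrivial (Baire-category) theorem that a proper lower semicontinuous convex function on a Banach space is locally Lipschitz on the interior of its domain — the same theorem that guarantees $\partial f(x)\ne\emptyset$ there, so that the ``singleton'' hypothesis is not vacuous. Granting that, everything else is routine manipulation of monotone difference quotients together with Hahn--Banach.
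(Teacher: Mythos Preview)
Your proof is correct and follows the standard convex-analytic route: establish that the one-sided directional derivative $d^{+}f(x,\cdot)$ is a finite sublinear functional, characterize $\partial f(x)$ as the set of linear minorants of $d^{+}f(x,\cdot)$, and use Hahn--Banach to show that $d^{+}f(x,\cdot)$ equals the support function of $\partial f(x)$, from which both implications follow.

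There is, however, nothing to compare against in the paper. Proposition~\ref{singdf} is stated in the preliminaries with a citation to \cite{bdian} (Butnariu--Iusem) and no proof is given; the authors simply quote it as a known result. Your argument is essentially the classical one that appears in such references. The one genuinely nontrivial ingredient, which you correctly isolate, is that a proper lower semicontinuous convex function on a Banach space is locally Lipschitz on $\operatorname{int}\operatorname{dom}f$; this is what guarantees both that $\partial f(x)\neq\emptyset$ and that the Hahn--Banach extension lands in $X^{*}$. Apart from that Baire-category fact, everything else in your write-up is routine and correct.
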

\begin{def}\cite{bdre}
Let $X$ be a Banach space and $f : X \rightarrow (-\infty, +\infty]$ be a convex
function. The function $f$ is called sequentially consistent if
for any two sequences $\{x_n\}$ and $\{y_n\}$ in $X$, such that the first one is bounded:
      \begin{equation*}
        \displaystyle\lim_{n\rightarrow\infty}D_f(y_n, x_n)=0\Rightarrow \displaystyle\lim_{n\rightarrow\infty}\|y_n - x_n\|=0.
      \end{equation*}
\end{def}
\begin{lem}\label{2.1}\cite{rsss}
If $f : X \rightarrow \mathbb{R}$ is uniformly Fr\'{e}chet differentiable and
bounded on bounded subsets of $X$, then $\nabla f$ is uniformly continuous on
bounded subsets of $X$ from the strong topology of $X$ to the strong topology
of $X^*$.
\end{lem}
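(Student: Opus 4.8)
The plan is to reduce the statement to an explicit $\eps$--$\delta$ estimate on a fixed bounded set, extracting a quantitative remainder bound from the uniform Fr\'{e}chet differentiability and controlling differences of function values by local Lipschitz continuity of $f$. First I would fix a bounded set $B\subseteq X$; we may assume $B=\{x:\norm{x}\le r\}$, and we enlarge it to the bounded set $B'=\{x:\norm{x}\le r+1\}$. Since $f$ is convex and bounded on bounded subsets of $X$, it is locally Lipschitz, and a covering argument produces a single Lipschitz constant $L$ for $f$ on $B'$. Given $\eps>0$, the uniform Fr\'{e}chet differentiability of $f$ on $B'$ furnishes $t_0\in(0,1)$ such that, for all $u\in B'$, all $h$ with $\norm{h}=1$, and all $t\in(0,t_0)$,
\[
\bigl| f(u+th)-f(u)-t\langle h,\nabla f(u)\rangle\bigr|\le \tfrac{\eps}{4}\,t,
\]
which is just the defining limit multiplied through by $t$.

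Next I would take $x,y\in B$ with $\norm{x-y}<\delta$, where $\delta<t_0$ is to be chosen. For any unit vector $h$ and any $t\in(0,t_0)$ both $x+th$ and $y+th$ lie in $B'$, so subtracting the above estimate at $u=x$ from the one at $u=y$ gives
\[
t\,\langle h,\nabla f(x)-\nabla f(y)\rangle=\bigl[f(x+th)-f(y+th)\bigr]-\bigl[f(x)-f(y)\bigr]+r,\qquad |r|\le\tfrac{\eps}{2}\,t .
\]
Since $\norm{(x+th)-(y+th)}=\norm{x-y}<\delta$, the Lipschitz bound makes each bracket at most $L\delta$ in absolute value, whence $t\,\bigl|\langle h,\nabla f(x)-\nabla f(y)\rangle\bigr|\le 2L\delta+\tfrac{\eps}{2}\,t$. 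Fixing $t=t_0/2$ and then choosing $\delta<\eps t_0/(8L)$ gives $\bigl|\langle h,\nabla f(x)-\nabla f(y)\rangle\bigr|<\eps$ for every unit vector $h$, i.e. $\norm{\nabla f(x)-\nabla f(y)}_*\le\eps$. Since $B$ was an arbitrary bounded set, $\nabla f$ is uniformly continuous on bounded subsets of $X$ from the strong topology of $X$ to the strong topology of $X^*$.

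The step I expect to be the main obstacle is the passage from ``bounded on bounded subsets'' to the uniform Lipschitz estimate for $f$ on $B'$: this is where the convexity of $f$ is used, via the classical fact that a convex function bounded above on a ball is locally Lipschitz there, together with a covering argument to globalize the local constants over $B'$. One must also be careful to enlarge $B$ to $B'$ before invoking uniform Fr\'{e}chet differentiability, since the estimates are needed at the shifted points $x+th$ and $y+th$ rather than at $x$ and $y$ themselves.
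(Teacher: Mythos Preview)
The paper does not prove this lemma; it is quoted from Reich and Sabach \cite{rsss} without argument, so there is no ``paper's own proof'' to compare against. Your proof is a correct and standard way to establish the result.

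One small point worth flagging: you obtain the uniform Lipschitz bound for $f$ on $B'$ by appealing to convexity, but the lemma as stated does not list convexity among its hypotheses. In the context of this paper the issue is moot, since $f$ is throughout a Legendre (hence convex) function; but if you want the argument to match the lemma exactly, you can avoid convexity altogether. Simply use the same uniform Fr\'{e}chet estimate with a fixed $t=t_0/2$ to bound $\bigl|\langle h,\nabla f(u)\rangle\bigr|$ uniformly for $u\in B'$ and $\norm{h}=1$ (the right-hand side involves only $t_0$, $\eps$, and a bound for $|f|$ on a slightly larger ball), so $\nabla f$ is bounded on $B'$; then the mean-value inequality (the paper's Lemma~\ref{d_g}) gives the Lipschitz constant $L$ you need on $B'$. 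The rest of your argument is unchanged.
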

\begin{lem}\label{2.2}\cite{bdian}
If $dom f$ contains at least two points, then the function $f$
is totally convex on bounded sets if and only if the function $f$ is sequentially
consistent.
\end{lem}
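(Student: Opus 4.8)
The plan is to prove both implications by contradiction, the common engine being a scaling estimate for the modulus of total convexity: for every $x\in\text{int dom}\,f$, every $t>0$ and every $\lambda\ge 1$,
\[
\upsilon_f(x,\lambda t)\ \ge\ \lambda\,\upsilon_f(x,t);
\]
in particular $\upsilon_f(x,\cdot)$ is nondecreasing on $(0,\infty)$. To obtain this I would fix $y\in\text{dom}\,f$ with $\|y-x\|=\lambda t$ and consider $\varphi(s):=D_f\big(x+s(y-x),x\big)=f\big(x+s(y-x)\big)-f(x)-s\langle y-x,\nabla f(x)\rangle$ for $s\in[0,1]$. Since $f$ is convex, $\varphi$ is convex, and $\varphi(0)=0$, so $s\mapsto\varphi(s)/s$ is nondecreasing on $(0,1]$; evaluating at $s=1/\lambda$ gives $\lambda\,\varphi(1/\lambda)\le\varphi(1)=D_f(y,x)$. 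Since $z:=x+\lambda^{-1}(y-x)$ lies in $\text{dom}\,f$ by convexity and satisfies $\|z-x\|=t$, this reads $D_f(y,x)\ge\lambda\,D_f(z,x)\ge\lambda\,\upsilon_f(x,t)$, and taking the infimum over all admissible $y$ proves the claim.

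Next, suppose $f$ is totally convex on bounded sets, let $\{x_n\}$ be bounded and let $\{y_n\}\subseteq X$ satisfy $D_f(y_n,x_n)\to0$; then each $x_n\in\text{int dom}\,f$ and $y_n\in\text{dom}\,f$ for $n$ large. If $\|y_n-x_n\|\not\to0$, pass to a subsequence with $\|y_{n_k}-x_{n_k}\|\ge\varepsilon>0$ and set $C:=\{x_n:n\in\mathbb N\}$, a nonempty bounded set. By the scaling estimate and the definition of $\upsilon_f$,
\[
D_f(y_{n_k},x_{n_k})\ \ge\ \upsilon_f\big(x_{n_k},\|y_{n_k}-x_{n_k}\|\big)\ \ge\ \upsilon_f(x_{n_k},\varepsilon)\ \ge\ \upsilon_f(C,\varepsilon)\ >\ 0,
\]
which contradicts $D_f(y_n,x_n)\to0$. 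Hence $\|y_n-x_n\|\to0$, i.e. $f$ is sequentially consistent.

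For the converse, assume $f$ is sequentially consistent but not totally convex on bounded sets, so there are a nonempty bounded set $C$ and a number $t_0>0$ with $\upsilon_f(C,t_0)=0$. Then $C\cap\text{int dom}\,f\neq\emptyset$ (otherwise $\upsilon_f(C,t_0)=+\infty$), so I can pick $x_n\in C\cap\text{int dom}\,f$ with $\upsilon_f(x_n,t_0)\to0$; for $n$ large, $\upsilon_f(x_n,t_0)$ is a finite infimum, so there is $y_n\in\text{dom}\,f$ with $\|y_n-x_n\|=t_0$ and $D_f(y_n,x_n)\le\upsilon_f(x_n,t_0)+1/n$. Then $\{x_n\}\subseteq C$ is bounded and $D_f(y_n,x_n)\to0$, so sequential consistency forces $\|y_n-x_n\|\to0$, contradicting $\|y_n-x_n\|=t_0>0$. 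This establishes the equivalence.

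The one genuinely technical ingredient is the scaling inequality $\upsilon_f(x,\lambda t)\ge\lambda\,\upsilon_f(x,t)$ (equivalently, the monotonicity of $\upsilon_f(x,\cdot)$), which in turn rests on the elementary fact that $s\mapsto\varphi(s)/s$ increases whenever $\varphi$ is convex with $\varphi(0)=0$; once this is in hand, both directions are essentially bookkeeping with infima. The standing hypothesis that $\text{dom}\,f$ contains at least two points is used only to guarantee that the moduli $\upsilon_f(x,t)$ and $\upsilon_f(C,t)$ are not vacuously $+\infty$, so that the statement is meaningful.
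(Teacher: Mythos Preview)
The paper does not prove this lemma; it is quoted from Butnariu--Iusem \cite{bdian} (their Lemma~2.1.2/Proposition~1.2.2 framework) without argument, so there is no in-paper proof to compare against. Your proof is correct and is essentially the standard argument from that reference: the superadditivity/monotonicity estimate $\upsilon_f(x,\lambda t)\ge\lambda\,\upsilon_f(x,t)$ for $\lambda\ge1$, obtained via convexity of $s\mapsto D_f(x+s(y-x),x)$ with value~$0$ at $s=0$, is exactly the key lemma there, and both implications then follow by the contradiction arguments you gave.

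One minor remark: your closing sentence slightly overstates the role of the two-point hypothesis. In your argument you never invoke it directly; the reverse direction works because $\upsilon_f(C,t_0)=0$ forces $\upsilon_f(x_n,t_0)<\infty$ for large~$n$, which in turn guarantees the existence of the approximating~$y_n$ without appealing to the size of $\text{dom}\,f$. The hypothesis is really a non-triviality condition ensuring the statement is not vacuous (if $\text{dom}\,f$ is a singleton, both properties hold trivially). This does not affect the validity of your proof.
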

\begin{lem}\label{2.10}\cite{bdianz}
Suppose that $f : X \rightarrow (-\infty, +\infty]$ is a Legendre function.
The function $f$ is totally convex on bounded subsets if and only if $f$ is
uniformly convex on bounded subsets.
\end{lem}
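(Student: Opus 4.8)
The plan is to prove the two implications separately, linking the modulus of total convexity $\upsilon_f$ to the modulus of uniform convexity of $f$ on bounded sets through two elementary identities for the Bregman distance. The first is the \emph{midpoint identity}: for $x,y\in\mathrm{dom}\,f$ with midpoint $z=\tfrac{x+y}{2}\in\mathrm{int}\,\mathrm{dom}\,f$, summing the two instances of \eqref{Df} gives
\begin{equation*}
D_f(x,z)+D_f(y,z)=f(x)+f(y)-2f(z),
\end{equation*}
since the gradient contributions cancel ($x+y-2z=0$). The second, obtained by restricting $f$ to the segment $[x,y]$, using G\^ateaux differentiability at $x\in\mathrm{int}\,\mathrm{dom}\,f$ and convexity of $s\mapsto f(x+s(y-x))$, is the one-sided estimate
\begin{equation*}
D_f(y,x)\ \ge\ f(x)+f(y)-2f\Big(\tfrac{x+y}{2}\Big)=2\Big[\tfrac12 f(x)+\tfrac12 f(y)-f\big(\tfrac{x+y}{2}\big)\Big].
\end{equation*}

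For the implication ``uniformly convex $\Rightarrow$ totally convex'' I would argue directly: given a bounded set $C$ and $t>0$, let $C'$ be a bounded set containing every point within distance $t$ of $C$; then for $x\in C\cap\mathrm{int}\,\mathrm{dom}\,f$ and $\|y-x\|=t$ the one-sided estimate yields $D_f(y,x)\ge 2\rho_f(C',t)$, where $\rho_f(C',\cdot)$ denotes the modulus of uniform convexity of $f$ on $C'$, so $\upsilon_f(C,t)\ge 2\rho_f(C',t)>0$ and $f$ is totally convex on bounded subsets.

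For the converse I would assume $f$ is totally convex on bounded subsets, fix a bounded set $C$ (which may be taken convex by replacing it with $\overline{\mathrm{conv}}\,C$) and $t>0$, and argue by contradiction. If $f$ failed to be uniformly convex on $C$ at scale $t$, there would exist $x_n,y_n\in C\cap\mathrm{int}\,\mathrm{dom}\,f$ with $\|x_n-y_n\|=t$ and $\tfrac12 f(x_n)+\tfrac12 f(y_n)-f(z_n)\to 0$, where $z_n=\tfrac{x_n+y_n}{2}\in C\cap\mathrm{int}\,\mathrm{dom}\,f$ (as $\mathrm{int}\,\mathrm{dom}\,f$ is convex). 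By the midpoint identity $D_f(x_n,z_n)\to 0$, but $\|x_n-z_n\|=t/2$ and $z_n\in C\cap\mathrm{int}\,\mathrm{dom}\,f$ force $D_f(x_n,z_n)\ge\upsilon_f(z_n,t/2)\ge\upsilon_f(C,t/2)>0$, a contradiction. Hence the ``midpoint'' modulus of convexity of $f$ is positive on every bounded set, and the standard equivalence of this with positivity of the full (normalized) gage of uniform convexity over all convex combinations $\lambda x+(1-\lambda)y$ finishes the proof.

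The main obstacle — and the point at which the Legendre hypothesis is really used — is the domain bookkeeping: one must keep the midpoints $z_n$ inside $\mathrm{int}\,\mathrm{dom}\,f$, ensure $D_f(\cdot,z_n)$ is well defined, and know that $D_f(y,x)=0$ forces $x=y$; Legendreness supplies precisely this through $\mathrm{dom}\,\nabla f=\mathrm{int}\,\mathrm{dom}\,f$, $\nabla f=(\nabla f^*)^{-1}$, and \cite[Theorem 7.3(vi)]{bhhbjmcpl}. A secondary technical point is the reconciliation of the midpoint modulus with the standard modulus of uniform convexity taken over all convex combinations; this equivalence is classical, and I would cite it rather than reprove it.
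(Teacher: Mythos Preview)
The paper does not prove this lemma; it is quoted from \cite{bdianz} as a preliminary fact and invoked later without argument. There is therefore no in-paper proof against which to compare your proposal.

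That said, your sketch is essentially the standard route and is sound. The midpoint identity and the one-sided estimate you state are both correct (the latter is nothing more than the gradient inequality $f(z)\ge f(x)+\langle z-x,\nabla f(x)\rangle$ applied at $z=\tfrac{x+y}{2}$), and together they deliver the two implications with the domain caveats you flag. One cosmetic point: the constant in your line $D_f(y,x)\ge 2\rho_f(C',t)$ depends on which normalization of the uniform-convexity gauge you have in mind; with the paper's $\rho_r$ (which divides by $\alpha(1-\alpha)$) one actually obtains $D_f(y,x)\ge \tfrac12\rho_{r}(t)$, but this is immaterial since only positivity matters. Your deferral of the ``midpoint modulus versus arbitrary-$\alpha$ modulus'' equivalence to a classical citation is reasonable; if you prefer to be self-contained, the usual iterated-bisection argument (as in Z\u{a}linescu \cite{Zalinescu}) handles it in a few lines.
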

\begin{lem}\label{d_g}\cite{aapg}
Let $f:X \rightarrow Y$ be G\^{a}teaux differentiable at any point of $X$. Given $u, v \in X$ such that $[u, v]\subset X$, then
\begin{equation*}
  \|f(u)-f(v)\|\leq\sup\{\|d_Gf(w)\|: w\in [u, v]\}\|u-v\|,
 \end{equation*}
  where $d_Gf(w)$ is called the G\^{a}teaux differential of $f$ at $w$.
\end{lem}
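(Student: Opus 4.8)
The plan is to deduce this vector-valued mean value inequality from the classical one-dimensional mean value theorem, using a Hahn--Banach functional to pass from $Y$ to $\mathbb{R}$. If $f(u)=f(v)$, or if $\sup\{\|d_Gf(w)\|:w\in[u,v]\}=+\infty$, the inequality holds trivially, so assume $f(u)\neq f(v)$ and that this supremum is finite (here $d_Gf(w)$ is read as a bounded linear map $X\to Y$, so that $\|d_Gf(w)\|$ is meaningful). By the Hahn--Banach theorem choose $\phi\in Y^{*}$ with $\|\phi\|=1$ and $\phi\big(f(u)-f(v)\big)=\|f(u)-f(v)\|$. Write $h:=u-v$, parametrize the segment by $\gamma(t):=v+t(u-v)$, $t\in[0,1]$, and define the scalar function $g:[0,1]\to\mathbb{R}$ by $g(t):=\phi\big(f(\gamma(t))\big)$.

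The key step is to verify that $g$ is differentiable on $[0,1]$ with $g'(t)=\phi\big(d_Gf(\gamma(t))(h)\big)$. Indeed, for fixed $t$ and small $s$ we have
$$\frac{g(t+s)-g(t)}{s}=\phi\!\left(\frac{f(\gamma(t)+sh)-f(\gamma(t))}{s}\right),$$
and since $f$ is G\^{a}teaux differentiable at the point $\gamma(t)$, the difference quotient inside converges in $Y$ to $d_Gf(\gamma(t))(h)$ as $s\to 0$; continuity and linearity of $\phi$ then yield the formula for $g'$. In particular $g$ is continuous on $[0,1]$ and differentiable on $(0,1)$. I would use this route rather than the ``fundamental theorem of calculus'' identity $f(u)-f(v)=\int_{0}^{1}d_Gf(\gamma(t))(h)\,\ud t$, because integrability of $t\mapsto d_Gf(\gamma(t))(h)$ is not guaranteed under mere pointwise G\^{a}teaux differentiability.

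Applying the classical mean value theorem to $g$ yields $\theta\in(0,1)$ with $g(1)-g(0)=g'(\theta)$. Setting $w:=\gamma(\theta)\in[u,v]$, the choice of $\phi$ and the formula for $g'$ give
$$\|f(u)-f(v)\|=\phi\big(f(u)-f(v)\big)=g(1)-g(0)=g'(\theta)=\phi\big(d_Gf(w)(h)\big).$$
Finally, since $\|\phi\|=1$, we obtain $\|f(u)-f(v)\|=\big|\phi\big(d_Gf(w)(h)\big)\big|\le\|d_Gf(w)\|\,\|h\|\le\sup\{\|d_Gf(w')\|:w'\in[u,v]\}\,\|u-v\|$, which is the assertion. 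The only genuinely delicate point is the second paragraph — extracting differentiability of the one-variable function $g$ and identifying $g'$ from the hypothesis that $f$ is G\^{a}teaux differentiable along the segment; everything else reduces to the one-variable mean value theorem together with the norm inequalities displayed above.
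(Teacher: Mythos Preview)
Your argument is correct and is the standard proof of this mean value inequality: reduce to a scalar function via a norming Hahn--Banach functional, compute its derivative from G\^{a}teaux differentiability along the segment, and apply the one-variable mean value theorem. Your care in avoiding the integral representation (which would require additional regularity) and in disposing of the cases $f(u)=f(v)$ and unbounded differential is appropriate.

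There is nothing to compare against in the paper itself: the lemma is quoted without proof from Ambrosetti and Prodi \cite{aapg}, where the argument given is essentially the one you wrote. So your proposal both matches the intended source and stands on its own.
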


\begin{lem}\label{2.4}\cite{ss}
Let $f : X \rightarrow \mathbb{R}$ be a Legendre function such that $\nabla f^*$ is
bounded on bounded subsets of int $dom f^*$. Let $x_0 \in X$. If $\{D_f (x_0, x_n)\}$ is
bounded, then the sequence $\{x_n\}$ is bounded too.
\end{lem}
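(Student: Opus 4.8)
The plan is to argue by contradiction using the sequential consistency / total convexity machinery together with the duality between $f$ and $f^*$. Suppose $\{D_f(x_0,x_n)\}$ is bounded, say by $M>0$, but $\{x_n\}$ is unbounded, so that (after passing to a subsequence) $\|x_n\|\to\infty$. The key identity I would exploit is the three–point relation obtained by rewriting the Bregman distance through the Fenchel conjugate: since $f$ is Legendre, $\nabla f=(\nabla f^*)^{-1}$ and one has
\begin{equation*}
D_f(x_0,x_n)=f(x_0)+f^*(\nabla f(x_n))-\langle \nabla f(x_n),x_0\rangle .
\end{equation*}
Hence boundedness of $\{D_f(x_0,x_n)\}$ forces $\{f^*(\nabla f(x_n))-\langle\nabla f(x_n),x_0\rangle\}$ to be bounded; combined with the Young--Fenchel inequality $\langle\nabla f(x_n),x_0\rangle\le f(x_0)+f^*(\nabla f(x_n))$ this gives a two–sided control and shows $\{\nabla f(x_n)\}$ lies in a bounded subset of $X^*$.

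Next I would invoke the hypothesis that $\nabla f^*$ is bounded on bounded subsets of $\mathrm{int}\,\mathrm{dom}\,f^*$. Since $\nabla f(x_n)\in \mathrm{ran}\,\nabla f=\mathrm{dom}\,\nabla f^*=\mathrm{int}\,\mathrm{dom}\,f^*$ and $\{\nabla f(x_n)\}$ is bounded in $X^*$, the set $\{\nabla f^*(\nabla f(x_n))\}=\{x_n\}$ is a bounded subset of $X$, which contradicts $\|x_n\|\to\infty$. Here I am using the Legendre identity $\nabla f^*\circ\nabla f=\mathrm{id}$ on $\mathrm{int}\,\mathrm{dom}\,f$ recalled in the preliminaries.

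The main obstacle is the first step: extracting boundedness of $\{\nabla f(x_n)\}$ in $X^*$ purely from the scalar bound on $D_f(x_0,x_n)$. The Fenchel-conjugate rewriting handles this cleanly, but one must be careful that the expression $f^*(\nabla f(x_n))-\langle\nabla f(x_n),x_0\rangle$ is bounded \emph{below} as well (so that $f^*(\nabla f(x_n))$ cannot run off to $-\infty$ while the pairing compensates); this is exactly where the Young--Fenchel inequality applied at the fixed point $x_0$ is used. Once that is in place, the rest is a direct application of the stated boundedness property of $\nabla f^*$ and the Legendre inversion formulas, with no further estimates required.
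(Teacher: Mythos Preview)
The paper does not supply its own proof of this lemma; it is simply quoted from \cite{ss}. So there is nothing in the paper to compare your argument against, and the question becomes whether your sketch stands on its own.

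Your overall architecture is the right one: rewrite $D_f(x_0,x_n)=V_f(x_0,\nabla f(x_n))=f(x_0)+f^*(\nabla f(x_n))-\langle \nabla f(x_n),x_0\rangle$, deduce that $\{\nabla f(x_n)\}$ is bounded in $X^*$, and then apply the hypothesis that $\nabla f^*$ is bounded on bounded subsets of $\mathrm{int}\,\mathrm{dom}\,f^*$ together with $\nabla f^*\circ\nabla f=\mathrm{id}$ to conclude. The final step is fine.

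The gap is in the middle step. Your justification ``combined with the Young--Fenchel inequality \ldots\ this gives a two--sided control and shows $\{\nabla f(x_n)\}$ lies in a bounded subset of $X^*$'' does not work. Young--Fenchel only tells you $f^*(\xi)-\langle\xi,x_0\rangle\ge -f(x_0)$, i.e.\ that $V_f(x_0,\cdot)\ge 0$; it gives no coercivity of $\xi\mapsto V_f(x_0,\xi)$ and hence no bound on $\|\xi_n\|$ from a bound on $V_f(x_0,\xi_n)$. What is actually needed is that the sublevel sets of $\xi\mapsto f^*(\xi)-\langle\xi,x_0\rangle$ are bounded. This \emph{does} hold here, but for a different reason: since $f:X\to\mathbb{R}$ is proper, lsc and convex with $\mathrm{dom}\,f=X$, $f$ is continuous on $X$; in particular $f$ is bounded, say by $K$, on some ball $B(x_0,r)$. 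Then for any $\xi$ with $f^*(\xi)-\langle\xi,x_0\rangle\le C$ and any $v$ with $\|v\|\le r$,
\[
\langle\xi,v\rangle=\langle\xi,x_0+v\rangle-\langle\xi,x_0\rangle\le f^*(\xi)+f(x_0+v)-\langle\xi,x_0\rangle\le C+K,
\]
whence $\|\xi\|\le (C+K)/r$. This is the missing ingredient; once you insert it in place of the Young--Fenchel sentence, the proof goes through.
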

\begin{lem}\label{2.5}\cite{rsss}
Suppose that $f$ is G\^{a}teaux differentiable and totally convex on
int $dom f$. Let $x \in$ int $dom f$ and $C \subset$ int $dom f$ be a nonempty, closed and
convex set. If $\hat{x}\in C$, then the following conditions are equivalent:
\begin{enumerate}
  \item [\emph{(i)}] The vector $\hat{x} \in C$ is the Bregman projection of $x$ onto $C$ with respect to
$f$.
  \item [\emph{(ii)}] The vector $\hat{x} \in C$ is the unique solution of the variational inequality:
  $$\langle z-y, \nabla f(z)-\nabla f(x)\rangle \leq 0, \;\;\forall y\in C.$$
  \item [\emph{(iii)}] The vector $\hat{x}$ is the unique solution of the inequality:
  $$D_f (y, z) + D_f (z, x) \leq D_f (y, x),\;\; \forall y\in C.$$
\end{enumerate}
\end{lem}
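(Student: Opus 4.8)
The plan is to prove the chain $(i)\Rightarrow(ii)\Rightarrow(iii)\Rightarrow(i)$ together with the uniqueness clauses, writing $z$ for the candidate point $\hat x$ throughout. The workhorse is the \emph{three-point identity}: for every $y\in\operatorname{dom}f$ and all $z,x\in\operatorname{int}\operatorname{dom}f$,
\[
D_f(y,x)=D_f(y,z)+D_f(z,x)+\langle y-z,\ \nabla f(z)-\nabla f(x)\rangle ,
\]
which one obtains simply by expanding each of the three Bregman distances via \eqref{Df} and cancelling the $f(z)$ term and the $\langle\cdot,\nabla f(z)\rangle$, $\langle\cdot,\nabla f(x)\rangle$ contributions. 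This step uses nothing beyond the definition of $D_f$ and is a short computation.

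For $(i)\Leftrightarrow(ii)$, recall that $z=\hat x$ is by definition the Bregman projection of $x$ onto $C$ iff $z$ minimizes the convex function $\varphi(y):=D_f(y,x)=f(y)-f(x)-\langle y-x,\nabla f(x)\rangle$ over the convex set $C$. Given $y\in C$ and $t\in(0,1]$ put $z_t:=(1-t)z+ty\in C$. The map $t\mapsto\varphi(z_t)$ is convex on $[0,1]$ (composition of the convex $f$ with an affine map, minus an affine function), so its right derivative at $0$ exists and, by G\^ateaux differentiability of $f$ at $z\in\operatorname{int}\operatorname{dom}f$, equals $f^{\circ}(z,y-z)-\langle y-z,\nabla f(x)\rangle=\langle y-z,\ \nabla f(z)-\nabla f(x)\rangle$. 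Since $0$ minimizes $t\mapsto\varphi(z_t)$ over $[0,1]$ exactly when this right derivative is nonnegative, $z$ minimizes $\varphi$ over $C$ if and only if $\langle y-z,\ \nabla f(z)-\nabla f(x)\rangle\ge 0$ for all $y\in C$, equivalently $\langle z-y,\ \nabla f(z)-\nabla f(x)\rangle\le 0$ for all $y\in C$, which is the variational inequality in (ii).

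For $(ii)\Leftrightarrow(iii)$ and $(iii)\Rightarrow(i)$, substitute the three-point identity: $D_f(y,z)+D_f(z,x)\le D_f(y,x)$ holds iff $\langle y-z,\ \nabla f(z)-\nabla f(x)\rangle\ge0$, i.e. iff $\langle z-y,\ \nabla f(z)-\nabla f(x)\rangle\le0$; quantifying over $y\in C$ shows (ii) and (iii) are literally the same condition. Also $(iii)\Rightarrow(i)$ is immediate, since $D_f(y,z)\ge0$ forces $D_f(z,x)\le D_f(y,x)$ for all $y\in C$, so $z$ minimizes $\varphi$ over $C$. Finally, for uniqueness it suffices to show a point satisfying (iii) is unique and to transport this through the equivalences: if $z_1,z_2\in C$ both satisfy (iii), apply (iii) for $z_1$ at $y=z_2$ and for $z_2$ at $y=z_1$ and add the two inequalities; the $D_f(\cdot,x)$ terms cancel, leaving $D_f(z_2,z_1)+D_f(z_1,z_2)\le0$, hence $D_f(z_1,z_2)=0$. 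Total convexity of $f$ at $z_2$ gives $\upsilon_f(z_2,t)>0$ for $t>0$, and $D_f(z_1,z_2)\ge\upsilon_f(z_2,\|z_1-z_2\|)$, so $\|z_1-z_2\|=0$.

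The only delicate point is the differentiation step in $(i)\Rightarrow(ii)$: one must be certain that the one-sided derivative of $t\mapsto\varphi(z_t)$ at $t=0$ is exactly $\langle y-z,\ \nabla f(z)-\nabla f(x)\rangle$. Handling this via convexity of $t\mapsto\varphi(z_t)$ (which guarantees the right derivative exists) together with G\^ateaux differentiability of $f$ at the interior point $z$ (which makes the directional derivative linear in the direction and represented by $\nabla f(z)$) settles it, and in particular avoids invoking any continuity property of $\nabla f$ such as Lemma~\ref{2.1}.
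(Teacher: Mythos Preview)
Your argument is correct. The three-point identity, the first-order optimality characterisation of a convex minimiser over a convex set, and the uniqueness step via total convexity are exactly the standard ingredients; each step is valid as written, including the point you flag about the one-sided derivative (convexity of $t\mapsto\varphi(z_t)$ plus G\^ateaux differentiability of $f$ at $z\in\operatorname{int}\operatorname{dom}f$ is all that is needed).

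As for comparison: the paper does not prove this lemma at all. It is stated as Lemma~\ref{2.5} with a citation to Reich--Sabach \cite{rsss} and used as a black box in the proof of Theorem~\ref{maina}. So there is no ``paper's own proof'' to compare against; your write-up simply supplies what the paper outsources to the reference, and it does so along the same lines as the original source (three-point identity plus first-order optimality plus total convexity for uniqueness).
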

 The function $V_f : X \times X^* \rightarrow [0,+\infty]$ is defined by
 \begin{equation}\label{vf}
    V_f (x, x^*) = f(x) - \langle x, x^*\rangle + f^*(x^*),\;\;\; \forall x \in X, x^* \in X^*.
 \end{equation}
Therefore
\begin{equation}\label{dfvf}
  V_f (x, x^*) = D_f (x, \nabla f^*(x^*)),\;\;\; \forall x \in X, x^* \in X^*.
\end{equation}
Also, by the subdifferential inequality, we obtain
      \begin{equation}\label{vf<vf}
             V_f (x, x^*) + \langle\nabla f^*(x^*) - x, y^*\rangle\leq V_f (x, x^*+ y^*),
      \end{equation}
for all $x \in X$ and $x^*, y^* \in X^*$ \cite{kftw}.
It is known that $V_f$ is convex in the second variable. Hence, for all $z \in X$, we have
\begin{equation}\label{df<df}
  D_f\bigg{(}z,\nabla f^*\big{(}\sum_{i=1}^{N}t_i\nabla f(x_i)\big{)}\bigg{)}\leq \sum_{i=1}^{N}t_iD_f(z, x_i),
\end{equation}
where $\{x_i\}\subset X$ and $\{t_i\} \subset (0, 1)$ with $\Sigma_{i=1}^{N} t_i = 1$.

The Bregman projection $\overleftarrow{Proj}_{C}^{f}: \text{int}(\text{dom} f )\rightarrow C$ is defined as the necessarily
unique vector $\overleftarrow{Proj}_{C}^{f}(x)\in C$ that satisfies(see \cite{blm})
\begin{equation}\label{inf}
  D_f\big{(}\overleftarrow{Proj}_{C}^{f}(x), x\big{)}= \inf \{D_f(y, x) :y\in C\}.
\end{equation}

Let $X$ be a Banach space. We use $S_X$ to denote the unit sphere $S_X = \{x \in X : \|x\| = 1\}$ and $B_r:=\{y\in X: \|y\|\leq r\}$ for all $r>0$, and $B_r$ is the closed ball in $X$. Then the function $f:X \rightarrow \mathbb{R}$ is said to be uniformly convex on bounded subsets of $X$(see \cite{Zalinescu}) if
$\rho_r(t) > 0$ for all $r, t > 0$, where $\rho_r : [0, \infty) \rightarrow  [0,\infty]$ is defined by
\begin{equation*}
  \rho_r(t)=\displaystyle\inf_{x, y\in B_r,	\|x-y\|=t ,\alpha\in(0,1)} \frac{\alpha f(x) + (1-\alpha)f(y) - f(\alpha x + (1 -\alpha)y)}{\alpha(1-\alpha)},
\end{equation*}
for all $t \geq 0$. The function $\rho_r$ is called the gauge of uniform convexity of $f$.

Now, if $f$ is uniformly convex, then the following lemma is known.
\begin{lem}\label{fsum<sumf}\cite{neyaojc}
Let $X$ be a Banach space, $r > 0$ be a constant and $f : X \rightarrow \mathbb{R}$
be a uniformly convex function on bounded subsets of $X$. Then
\begin{equation*}
  f\bigg{(}\sum_{k=0}^{n}\alpha_kx_k\bigg{)}\leq \sum_{k=0}^{n}\alpha_kf(x_k)-\alpha_i\alpha_j\rho_r(\|x_i - x_j\|),
\end{equation*}
for all $i, j \in \{0, 1, 2, . . . , n\}, x_k \in B_r, \alpha_k \in (0, 1)$ and $k = 0, 1, 2, . . . , n$ with
$\sum_{k=0}^{n}\alpha_k=1$, where $\rho_r$ is the gauge of uniform convexity of $f$.
\end{lem}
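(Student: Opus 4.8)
The plan is to bootstrap the $(n+1)$-point estimate from the two-point inequality built into the definition of the gauge $\rho_r$, the idea being to merge the distinguished pair $x_i,x_j$ into a single point and then invoke convexity (Jensen's inequality) for the resulting combination. First I would note that $f$ is genuinely convex: for arbitrary $x,y\in X$ choose $r>0$ with $x,y\in B_r$; then the definition of $\rho_r$ gives $\alpha f(x)+(1-\alpha)f(y)-f(\alpha x+(1-\alpha)y)\geq\alpha(1-\alpha)\rho_r(\|x-y\|)\geq0$ for every $\alpha\in(0,1)$, so $f$ is convex and Jensen's inequality is available for any finite convex combination. If $i=j$ the assertion is then immediate, since $\rho_r(0)=0$ (the quotient in the definition of $\rho_r$ vanishes when $x=y$) and the inequality reduces to Jensen; so from now on I assume $i\neq j$.

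Next I would set $\lambda:=\alpha_i+\alpha_j\in(0,1]$, $\beta:=\alpha_i/\lambda\in(0,1)$, and introduce the merged point $p:=\beta x_i+(1-\beta)x_j$; note $p\in B_r$ because $B_r$ is convex, and $\alpha_ix_i+\alpha_jx_j=\lambda p$. Writing $\sum_{k=0}^{n}\alpha_kx_k=\lambda p+\sum_{k\neq i,j}\alpha_kx_k$ as a convex combination (the weight $\lambda$ together with the $\alpha_k$ for $k\neq i,j$ sum to $1$) and applying Jensen's inequality yields
\[
f\Big(\sum_{k=0}^{n}\alpha_kx_k\Big)\leq\lambda f(p)+\sum_{k\neq i,j}\alpha_kf(x_k),
\]
while applying the defining inequality of $\rho_r$ to $x_i,x_j\in B_r$ with weight $\beta$ gives
\[
f(p)\leq\beta f(x_i)+(1-\beta)f(x_j)-\beta(1-\beta)\rho_r(\|x_i-x_j\|).
\]
Combining these two displays, and using $\lambda\beta=\alpha_i$, $\lambda(1-\beta)=\alpha_j$ and $\lambda\beta(1-\beta)=\alpha_i\alpha_j/\lambda$, I would obtain
\[
f\Big(\sum_{k=0}^{n}\alpha_kx_k\Big)\leq\sum_{k=0}^{n}\alpha_kf(x_k)-\frac{\alpha_i\alpha_j}{\alpha_i+\alpha_j}\,\rho_r(\|x_i-x_j\|),
\]
and then conclude by noting $\alpha_i+\alpha_j\leq\sum_{k=0}^{n}\alpha_k=1$ together with $\rho_r(\|x_i-x_j\|)\geq0$, so that $\alpha_i\alpha_j/(\alpha_i+\alpha_j)\geq\alpha_i\alpha_j$ and the stated bound follows at once.

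I do not expect a substantial obstacle here; the argument is essentially a repackaging of the two-point definition of uniform convexity. The only points that require a little care are the verification that $f$ is convex (so that the Jensen step is legitimate) and the treatment of the degenerate cases $i=j$ and $\lambda=\alpha_i+\alpha_j=1$ (the latter forcing $n=1$, where the claim is literally the definition of $\rho_r$), all of which are dispatched in the steps above. An alternative route is a straightforward induction on $n$, but the merging trick gives the result in one stroke and with the slightly stronger constant $\alpha_i\alpha_j/(\alpha_i+\alpha_j)$.
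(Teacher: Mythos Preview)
Your argument is correct and self-contained. Note, however, that the paper does not actually prove this lemma: it is quoted verbatim from \cite{neyaojc} and stated without proof, so there is no ``paper's own proof'' to compare against. What you have supplied is a clean direct proof that the cited reference would also give (in essence the same merging-plus-Jensen idea), and you even obtain the sharper coefficient $\alpha_i\alpha_j/(\alpha_i+\alpha_j)$ before discarding the improvement. The small technical checks you flag (convexity of $f$, the degenerate cases $i=j$ and $\alpha_i+\alpha_j=1$) are handled correctly.
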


A function $f$ on $X$ is said to be coercive\cite{hujb}  if the sublevel set of $f$ is bounded, equivalently, $\lim_{\|x\|\rightarrow\infty}f(x)=\infty$. A function $f$ on $X$ is said to be strongly coercive \cite{Zalinescu} if $\lim_{\|x\|\rightarrow\infty}\frac{f(x)}{\|x\|}=\infty$.
The function $f$ is also said to be uniformly smooth on bounded subsets(\cite{Zalinescu}) if $\lim_{t\rightarrow 0}\frac{\sigma_r(t)}{t}=0$ for all $r > 0$, where $\sigma_r : [0, \infty) \rightarrow [0, \infty]$ is defined
by
\begin{equation*}
  \sigma_r(t)= \displaystyle\sup_{x, y\in B_r,	\|x-y\|=t ,\alpha\in(0,1)} \frac{\alpha f(x + (1-\alpha)ty) + (1-\alpha)f(x-\alpha ty)- f(x)}{\alpha(1-\alpha)},
\end{equation*}
for all $t\geq 0$.
We will need the following Propositions.
\begin{prop}\label{2.8}\cite{Zalinescu}
Let $f : X \rightarrow \mathbb{R}$ be a convex function which is strongly coercive.Then, the following are equivalent:
  \begin{enumerate}
    \item [\emph{(i)}] $f$ is bounded on bounded subsets and uniformly smooth on bounded subsets
of $X$.
    \item [\emph{(ii)}] $f$ is Fr\'{e}chet differentiable and $\nabla f$ is uniformly norm-to-norm continuous
on bounded subsets of $X$.
    \item [\emph{(iii)}] dom $f^* = X^*$, $f^*$ is strongly coercive and uniformly convex on bounded
subsets of $X^*$.
  \end{enumerate}
\end{prop}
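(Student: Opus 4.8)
The plan is to derive everything from Fenchel--Moreau biconjugation $f^{**}=f$ (valid since under each of (i)--(iii) the real-valued convex function $f$ is continuous, hence lower semicontinuous) together with the precise duality between the gauge of uniform convexity of a convex function and the gauge of uniform smoothness of its Fenchel conjugate. I would split the three-way equivalence into two pieces: (i)$\Leftrightarrow$(iii), handled by conjugate duality, and (i)$\Leftrightarrow$(ii), a more ``pointwise'' statement about Fr\'echet differentiability.

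For (i)$\Leftrightarrow$(iii) I would first record two elementary ``coercivity versus boundedness'' facts read off directly from the definition of $f^*$: namely, $f$ is bounded above on bounded subsets of $X$ if and only if $f^*$ is strongly coercive on $X^*$, and $f$ is strongly coercive if and only if $\mathrm{dom}\,f^*=X^*$ with $f^*$ bounded on bounded subsets of $X^*$. In each case one inequality is the trivial estimate $f^*(x^*)\ge\langle x,x^*\rangle-f(x)$, and the other uses that $x\mapsto f(x)-\langle x,x^*\rangle\to+\infty$ to see the supremum defining $f^*$ is finite and to bound it; since a finite convex function on a Banach space is locally bounded below, ``bounded above on bounded subsets'' upgrades to ``bounded on bounded subsets''. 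This already matches the coercivity/boundedness content of (i) and (iii). The heart of the matter is then a conjugacy-type relation between the modulus of smoothness $\sigma_r^{f}$ of $f$ and the modulus of convexity $\rho_s^{f^*}$ of $f^*$, of the shape
\begin{equation*}
\sigma_r^{f}(t)\le\sup_{\tau\ge0}\bigl(t\tau-\rho_s^{f^*}(\tau)\bigr),\qquad \rho_s^{f^*}(\tau)\le\sup_{t\ge0}\bigl(t\tau-\sigma_r^{f}(t)\bigr),
\end{equation*}
where the radii $r$ and $s$ are linked because, thanks to the coercivity/boundedness hypotheses just established, $\partial f$ carries $B_r$ into some ball of $X^*$ and $\partial f^*$ carries that ball back into some $B_r$. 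Granting this, uniform convexity of $f^*$ on bounded subsets, i.e. $\rho_s^{f^*}(\tau)>0$ for all $s,\tau>0$, forces $\sigma_r^{f}(t)/t\to0$ as $t\to0^{+}$ for every $r$, which is uniform smoothness of $f$ on bounded subsets; the reverse implication runs the same way, giving (i)$\Leftrightarrow$(iii).

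For (i)$\Leftrightarrow$(ii): assuming $f$ is uniformly smooth on bounded subsets, for $x$ in a ball and $\|y\|=1$ the convexity of $f$ bounds $f(x+ty)+f(x-ty)-2f(x)$ by a constant multiple of $\sigma_r(t)=o(t)$, uniformly in $x$ and $y$; hence $t\mapsto f(x+ty)$ has a two-sided linear derivative and the first-order remainder is $o(\|h\|)$ uniformly over the unit sphere, i.e. $f$ is Fr\'echet differentiable at each point of the ball with a modulus independent of the point, and then uniform norm-to-norm continuity of $\nabla f$ on the ball follows from this uniform modulus together with the monotonicity of $\nabla f$ (this is the type of conclusion recorded in Lemma~\ref{2.1}). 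Conversely, if $\nabla f$ is uniformly norm-to-norm continuous on bounded subsets, integrating $t\mapsto\langle\nabla f(x+ty)-\nabla f(x),y\rangle$ bounds $f(x+ty)+f(x-ty)-2f(x)$ and yields $\sigma_r(t)/t\to0$; boundedness on bounded subsets is immediate since a Fr\'echet differentiable convex function is continuous.

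The main obstacle is the conjugacy relation between $\sigma_r^{f}$ and $\rho_s^{f^*}$ in the first step, and specifically matching the radii on the two sides: one must verify that the strong coercivity of $f$ (equivalently, boundedness of $f^*$ on bounded sets) is exactly what makes the duality maps $\partial f$, $\partial f^*$ send bounded sets to bounded sets, so that ``on bounded subsets'' is preserved under conjugation. Once that correspondence is in place the remaining work is bookkeeping with the Fenchel--Young inequality, the subdifferential inequality \eqref{vf<vf}, and $f^{**}=f$. For the complete details we refer to \cite{Zalinescu}.
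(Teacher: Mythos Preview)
The paper does not give its own proof of this proposition: it is quoted verbatim as a known result from \cite{Zalinescu} and used as a black box in the main theorem. So there is nothing to compare your argument against on the paper's side.

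Your outline is the standard route and is essentially correct: the coercivity/boundedness duality between $f$ and $f^*$, the conjugacy between the moduli $\sigma_r$ and $\rho_s$ (with the radii linked via boundedness of $\partial f$ and $\partial f^*$ on bounded sets), and the local argument relating uniform smoothness to uniform Fr\'echet differentiability are exactly the ingredients in Z\u{a}linescu's proof. Since you close by deferring to \cite{Zalinescu} for the full details, your treatment is in fact more informative than the paper's, which simply cites the result without comment.
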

\begin{prop}\label{2.9}\cite{Zalinescu}
Let $f : X \rightarrow \mathbb{\mathbb{R}}$ be a convex function which is bounded on
bounded subsets of $X$. Then the following are equivalent:
\begin{enumerate}
  \item [\emph{(i)}] $f$ is strongly coercive and uniformly convex on bounded subsets of $X$.
  \item [\emph{(ii)}] $dom$ $f^* = X^*$, $f^*$ is bounded on bounded subsets and uniformly smooth
on bounded subsets of $X^*$.
  \item [\emph{(iii)}] $dom$ $f^* = X^*$, $f^*$ is Fr\'{e}chet differentiable and $\nabla f^*$ is uniformly norm-to-norm continuous on bounded subsets of $X^*$.
\end{enumerate}
\end{prop}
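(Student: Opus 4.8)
The plan is to obtain Proposition~\ref{2.9} as the dual counterpart of Proposition~\ref{2.8}: apply Proposition~\ref{2.8} to the Fenchel conjugate $g:=f^{*}$ (with the roles of $X$ and $X^{*}$ interchanged) and then translate the three resulting conditions back to $f$ by means of the biconjugation identity $f^{**}=f$, which is available here because $X$ is reflexive and $f$ --- being convex and bounded on bounded subsets, hence locally bounded above, hence continuous on $X$ --- is proper and lower semicontinuous. The first ingredient is a Fenchel--Young estimate: if $f(x)\le M_{r}$ whenever $\|x\|\le r$, then
\begin{equation*}
 f^{*}(x^{*})\ \ge\ \sup_{\|x\|\le r}\big(\langle x^{*},x\rangle-f(x)\big)\ \ge\ r\|x^{*}\|-M_{r},\qquad x^{*}\in X^{*},
\end{equation*}
so $\liminf_{\|x^{*}\|\to\infty}f^{*}(x^{*})/\|x^{*}\|\ge r$ for every $r>0$, whence $f^{*}$ is strongly coercive. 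Running the same estimate in the opposite direction yields the dual fact also needed: $f$ is strongly coercive \emph{if and only if} $\mathrm{dom}\,f^{*}=X^{*}$ and $f^{*}$ is bounded on bounded subsets of $X^{*}$.

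Next I would verify that under \emph{each} of (i), (ii), (iii) the conjugate $g=f^{*}$ is a real-valued convex function on $X^{*}$ that is bounded on bounded subsets and strongly coercive, so that Proposition~\ref{2.8} is applicable to $g$ (with $X$ replaced by $X^{*}$). Convexity is automatic for a conjugate, and strong coercivity of $g$ is the estimate above; finiteness together with boundedness on bounded subsets holds because: under (i), $f$ strongly coercive supplies it by the ``if and only if'' just proved; under (ii) it is part of the hypothesis; and under (iii), a Fr\'{e}chet differentiable convex function is locally Lipschitz, hence bounded on bounded subsets, and $\mathrm{dom}\,f^{*}=X^{*}$ is assumed. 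Since $f$ is proper, lower semicontinuous and convex, the Fenchel--Moreau theorem gives $g^{*}=f^{**}=f$ (as a function on $X^{**}=X$).

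Now apply Proposition~\ref{2.8} to $g=f^{*}$ and substitute $g^{*}=f$. Its three equivalent conditions become: (I) $f^{*}$ is bounded on bounded subsets and uniformly smooth on bounded subsets of $X^{*}$; (II) $f^{*}$ is Fr\'{e}chet differentiable and $\nabla f^{*}$ is uniformly norm-to-norm continuous on bounded subsets of $X^{*}$; (III) $\mathrm{dom}\,f=X$, $f$ is strongly coercive and uniformly convex on bounded subsets of $X$. Since $\mathrm{dom}\,f=X$ holds trivially, (III) is precisely condition (i) of Proposition~\ref{2.9}, while (I) and (II) are conditions (ii) and (iii) of Proposition~\ref{2.9} once one adjoins the clause $\mathrm{dom}\,f^{*}=X^{*}$: that clause is explicitly part of (ii) and (iii), and in the direction issuing from (i)$=$(III) it is handed to us by the ``if and only if'' of Step~1. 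This closes the chain of equivalences.

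\textbf{Main obstacle.} There is no new idea; the whole burden lies in the bookkeeping of the first two steps --- making sure that each of the three conditions genuinely places $g=f^{*}$ within the scope of Proposition~\ref{2.8} (in particular that $f^{*}$ is everywhere finite and that $f=f^{**}$), and confirming the Fenchel duality between ``bounded on bounded subsets'' and ``strongly coercive''. If one insisted on a proof not invoking Proposition~\ref{2.8}, the technically heavy part would instead be a direct Young-type conjugacy relating the gauge $\rho_{r}$ of uniform convexity of $f$ to the gauge $\sigma_{r}$ of uniform smoothness of $f^{*}$ (and conversely), which is the genuinely computational step.
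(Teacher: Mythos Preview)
The paper does not supply its own proof of Proposition~\ref{2.9}; the result is simply quoted from Z\u{a}linescu's monograph~\cite{Zalinescu}, so there is nothing to compare against on the paper's side.

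Your duality strategy is correct and is exactly the natural one: since $X$ is reflexive and $f$ is convex, real-valued and bounded on bounded subsets (hence continuous, hence lower semicontinuous), the Fenchel--Moreau theorem gives $f^{**}=f$, and your Fenchel--Young estimate shows that $g:=f^{*}$ is strongly coercive on $X^{*}$. Under each of (i), (ii), (iii) one also has $\mathrm{dom}\,f^{*}=X^{*}$, so $g$ is a real-valued, convex, strongly coercive function on the reflexive space $X^{*}$ and Proposition~\ref{2.8} applies to it. Reading off the three equivalent conditions of Proposition~\ref{2.8} for $g$ and using $g^{*}=f$ yields precisely (i), (ii), (iii) of Proposition~\ref{2.9}, with the clause $\mathrm{dom}\,f^{*}=X^{*}$ supplied in the direction (i)$\Rightarrow$(ii),(iii) by your ``if and only if'' between strong coercivity of $f$ and finiteness/boundedness of $f^{*}$.

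One small point of bookkeeping: you do not need to verify that $f^{*}$ is bounded on bounded subsets before invoking Proposition~\ref{2.8}; that proposition requires only that its input be convex and strongly coercive. This matters because the justification you offer under (iii) --- ``Fr\'{e}chet differentiable convex, hence locally Lipschitz, hence bounded on bounded subsets'' --- is not airtight in infinite dimensions (local Lipschitzness does not by itself force boundedness on bounded sets when balls fail to be totally bounded). Simply drop that clause: strong coercivity of $g$ comes from the standing hypothesis on $f$, real-valuedness of $g$ is assumed in (ii) and (iii) and follows from strong coercivity of $f$ in (i), and that is all Proposition~\ref{2.8} asks for. With this adjustment your argument is complete.
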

\begin{lem}\label{2.11}\cite{tjv}
Let $C$ be a nonempty convex subset of X and $f : C \rightarrow \mathbb{R}$ be
a convex and subdifferentiable function on $C$. Then $f$ attains its minimum
at $x\in C$ if and only if $0 \in  \partial f(x) + N_C(x)$, where $N_C(x)$ is the normal cone of $C$ at $x$, that is
\begin{equation*}
  N_C(x):=\{x^*\in X^* : \langle x - y, x^*\rangle\geq 0, \; \forall y\in C\}.
\end{equation*}
\end{lem}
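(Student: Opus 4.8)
The plan is to split the equivalence into its two implications, the one starting from the inclusion being elementary and the one starting from minimality carrying the real content. Throughout I will read $\partial f(x)$ as the set of $x^{*}\in X^{*}$ with $f(y)\ge f(x)+\langle y-x,x^{*}\rangle$ for all $y\in C$, and I will use that $N_{C}(x)=\partial\iota_{C}(x)$, where $\iota_{C}$ denotes the indicator function of $C$ (equal to $0$ on $C$ and $+\infty$ off $C$).

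For the direction $0\in\partial f(x)+N_{C}(x)\Rightarrow$ minimality, I would write $0=x^{*}+y^{*}$ with $x^{*}\in\partial f(x)$ and $y^{*}\in N_{C}(x)$, and fix an arbitrary $y\in C$. The subgradient inequality gives $f(y)\ge f(x)+\langle y-x,x^{*}\rangle$, while $y^{*}\in N_{C}(x)$ gives $\langle y-x,y^{*}\rangle\le 0$, hence $\langle y-x,x^{*}\rangle=-\langle y-x,y^{*}\rangle\ge 0$. Combining the two, $f(y)\ge f(x)$ for every $y\in C$, i.e. $x$ minimizes $f$ over $C$. This step is purely formal.

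For the converse, assume $f(x)\le f(y)$ for all $y\in C$. Then $x$ is an unconstrained minimizer of the convex function $f+\iota_{C}$ on $X$, so $0\in\partial(f+\iota_{C})(x)$. I would then invoke the Moreau--Rockafellar sum rule $\partial(f+\iota_{C})(x)=\partial f(x)+\partial\iota_{C}(x)=\partial f(x)+N_{C}(x)$, which together with $0\in\partial(f+\iota_{C})(x)$ yields the claim. An equivalent hands-on route avoids the indicator function: minimality and convexity of $C$ force the one-sided directional derivative $f'(x;y-x)=\lim_{t\downarrow 0}t^{-1}\bigl(f(x+t(y-x))-f(x)\bigr)\ge 0$ for every $y\in C$; using the representation $f'(x;d)=\max_{x^{*}\in\partial f(x)}\langle d,x^{*}\rangle$ and a minimax interchange over the weak-$*$ compact convex set $\partial f(x)$ one extracts a single $x^{*}\in\partial f(x)$ with $\langle y-x,x^{*}\rangle\ge 0$ for all $y\in C$, i.e. $-x^{*}\in N_{C}(x)$, whence $0=x^{*}+(-x^{*})\in\partial f(x)+N_{C}(x)$.

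The main obstacle is precisely this converse: the inclusion $\partial(f+\iota_{C})(x)\subseteq\partial f(x)+N_{C}(x)$ (equivalently, the interchange of $\inf_{y\in C}$ and $\max_{x^{*}\in\partial f(x)}$) is the only nontrivial analytic point, and in a general Banach space it needs a constraint qualification — a condition such as $C\cap\operatorname{int}(\operatorname{dom}f)\ne\emptyset$, or the regularity encoded in the phrase ``subdifferentiable on $C$'' — without which the equivalence can fail. The reverse inclusion $\partial f(x)+N_{C}(x)\subseteq\partial(f+\iota_{C})(x)$, and hence the entire ``if'' direction, holds with no extra hypothesis.
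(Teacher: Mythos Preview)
The paper does not supply its own proof of this lemma: it is quoted verbatim from Tiel's textbook \cite{tjv} and used as a black box in the argument for $q\in EP(g)$. So there is no ``paper's proof'' to compare against; your write-up is essentially the standard textbook argument and is correct, including your identification of the one nontrivial step (the Moreau--Rockafellar sum rule / minimax interchange) and the need for a constraint qualification there.
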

\begin{lem}\label{2.12}\cite{ci}
If f and $g$ are two convex functions on $X$ such that there
is a point $x_0\in$ dom $f$ $\cap$ dom $g$ where $f$ is continuous. Then
\begin{equation*}
  \partial(f + g)(x)= \partial f(x) + \partial g(x), \;\;\; \forall x\in X.
\end{equation*}
\end{lem}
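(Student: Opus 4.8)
The inclusion $\partial f(x)+\partial g(x)\subseteq\partial(f+g)(x)$ is immediate: for $u\in\partial f(x)$ and $v\in\partial g(x)$, adding the two subgradient inequalities $f(y)\geq f(x)+\langle y-x,u\rangle$ and $g(y)\geq g(x)+\langle y-x,v\rangle$ yields $(f+g)(y)\geq(f+g)(x)+\langle y-x,u+v\rangle$. So the real content is the reverse inclusion; when $\partial(f+g)(x)=\emptyset$ there is nothing to prove, so it suffices to take an arbitrary $x^{*}\in\partial(f+g)(x)$ and exhibit a splitting $x^{*}=u+v$ with $u\in\partial f(x)$, $v\in\partial g(x)$.

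First I would normalize by a translation/tilting. Replacing $f$ by $\tilde f(\cdot):=f(x+\cdot)-f(x)$ and $g$ by $\tilde g(\cdot):=g(x+\cdot)-g(x)-\langle\cdot,x^{*}\rangle$, one may assume $x=0$, $\tilde f(0)=\tilde g(0)=0$, that $\tilde f(y)+\tilde g(y)\geq 0$ for all $y\in X$ (which is exactly $0\in\partial(\tilde f+\tilde g)(0)$), and that $\tilde f$ is still continuous at the translated point $x_{1}:=x_{0}-x\in\text{dom}\,\tilde f\cap\text{dom}\,\tilde g$. In this reduced problem the goal is to find $l\in X^{*}$ with $-l\in\partial\tilde f(0)$ and $l\in\partial\tilde g(0)$; undoing the substitution then gives $u:=-l\in\partial f(x)$ and $v:=x^{*}+l\in\partial g(x)$, and $u+v=x^{*}$.

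The key step is a geometric Hahn--Banach separation in $X\times\mathbb{R}$. I would set $C_{1}=\{(y,t):\tilde f(y)\leq t\}$, the epigraph of $\tilde f$, and $C_{2}=\{(y,s):s\leq-\tilde g(y)\}$; both are convex, $(0,0)\in C_{1}\cap C_{2}$, continuity of $\tilde f$ at $x_{1}$ forces $\text{int}\,C_{1}\neq\emptyset$ (a small box around $(x_{1},\tilde f(x_{1})+1)$ lies in $C_{1}$), and $\tilde f+\tilde g\geq 0$ forces $\text{int}\,C_{1}\cap C_{2}=\emptyset$ (a common point would give $\tilde f(y)<-\tilde g(y)$). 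Separating $\text{int}\,C_{1}$ from $C_{2}$ yields a nonzero $(l,\beta)\in X^{*}\times\mathbb{R}$ and $\gamma\in\mathbb{R}$ with $\langle z,l\rangle+\beta t\geq\gamma$ on $C_{1}$ and $\langle y,l\rangle+\beta s\leq\gamma$ on $C_{2}$. Testing at $(0,0)\in C_{1}\cap C_{2}$ gives $\gamma=0$; letting the real coordinate run to $+\infty$ in the first inequality (and to $-\infty$ in the second) forces $\beta\geq 0$; and $\beta=0$ is impossible, since then $\langle\cdot,l\rangle\geq 0$ on $\text{dom}\,\tilde f$ while $x_{1}\in\text{int}\,\text{dom}\,\tilde f$, which forces $l=0$ and contradicts $(l,\beta)\neq 0$. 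Hence $\beta>0$; dividing by $\beta$ and taking $t=\tilde f(z)$ in the first inequality gives $\tilde f(z)\geq\langle z,-l\rangle$, i.e. $-l\in\partial\tilde f(0)$, while taking $s=-\tilde g(y)$ in the second gives $\tilde g(y)\geq\langle y,l\rangle$, i.e. $l\in\partial\tilde g(0)$, as required.

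The main obstacle is precisely this separation step: it requires $C_{1}$ to have nonempty interior, and that is the only place where the hypothesis that $f$ be continuous at some point of $\text{dom}\,f\cap\text{dom}\,g$ is used. Some care is also needed to check that the continuity point survives the normalizing translation and that the degenerate alternative $\beta=0$ is genuinely excluded; the remaining manipulations are routine bookkeeping.
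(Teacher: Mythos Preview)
The paper does not prove this lemma: it is quoted verbatim from Cioranescu's monograph with no argument given, so there is no ``paper's own proof'' to compare against. Your proposal is the classical Moreau--Rockafellar argument (separate $\mathrm{epi}\,\tilde f$ from $\mathrm{hypo}(-\tilde g)$ in $X\times\mathbb{R}$), which is exactly the proof one finds in the cited reference, and it is essentially correct.

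One small point worth tightening: in the step ruling out $\beta=0$, the fact that $\langle\cdot,l\rangle\geq 0$ on $\mathrm{dom}\,\tilde f$ together with $x_{1}\in\mathrm{int}\,\mathrm{dom}\,\tilde f$ does \emph{not} by itself force $l=0$ (a nonzero linear functional can be nonnegative on a set with nonempty interior, e.g.\ a half-space). You also need to use the $C_{2}$-inequality at $x_{1}$: since $x_{1}\in\mathrm{dom}\,\tilde g$ as well, the point $(x_{1},-\tilde g(x_{1}))\in C_{2}$ gives $\langle x_{1},l\rangle\leq 0$, hence $\langle x_{1},l\rangle=0$; \emph{then} the interior condition yields $\langle b,l\rangle\geq 0$ for all small $b$, forcing $l=0$. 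With that adjustment the argument is complete.
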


Let $X$ be a real reflexive Banach space and $C$ be a nonempty, closed and convex subset
of $X$. Let $g : C \times C\rightarrow \mathbb{R}$ be a bifunction such that $g(x, x) = 0$ for all $x\in C$. The
equilibrium problem ($EP$) with respect to $g$ on $C$ is stated as follows:
   \begin{equation}\label{EP}
     Find\;  y^*\in C\; \; such\; that \;\; g(x, y^*)\leq 0,\;\; for\; all\; x\in C.
   \end{equation}
The solution set of equilibrium problem \eqref{EP} is denoted by $EP(g)$.

Throughout this paper, we assume that $g : C \times C \rightarrow \mathbb{R}$ is a bifunction satisfying the following
conditions:
\begin{enumerate}
  \item [(A1)] $g$ is monotone, i.e., $g(x, y) +
g(y, x) \leq 0$, for all $x, y \in C$,\\
  \item [(A2)] $g$ is pseudomonotone, i.e., $g(x, y)\geq 0 \Rightarrow g(y, x)\leq0$, for all $x, y \in C$,\\
  \item [(A3)] $g$ is a Bregman-Lipschitz-type continuous function, i.e., there exist two positive
constants $c_1, c_2$, such that
\begin{equation*}
  g(x, y) + g(y, z) \geq g(x, z) - c_1D_f(y, x) - c_2D_f(z, y),\;\; \forall x, y, z \in C,
\end{equation*}
where $f : X \rightarrow (-\infty,+\infty]$ is a Legendre function. The constants $c_1, c_2$ are
called Bregman-Lipschitz coefficients with respect to $f$,
  \item [(A4)] $g$ is weakly  continuous on $C \times C$,\\
  \item [(A5)] $g(x, .)$ is convex, lower semicontinuous and subdifferentiable on $C$ for
every fixed $x \in C$,\\
  \item [(A6)]  $\limsup_{ t\downarrow0} g(tx + (1 - t)y, z) \leq g(y, z)$, for each $x, y, z \in C$.
\end{enumerate}
\begin{re} \cite{yjc} Every monotone bifunction on $C$ is pseudo-monotone but the converse
is not true. A mapping $A : C \rightarrow X^*$ is pseudo-monotone if and only if the
bifunction $g(x, y) = \langle Ax, y - x\rangle$ is pseudo-monotone on $C$.
\end{re}

We denote the set of fixed points of $S$ by $F(S)$, that is $F(S) = \{x \in
C : x \in Sx\}$. A point $x\in C$ is called an asymptotic fixed
point of $S$ if $C$  contains a sequence $\{x_n\}$ which converges weakly to $x$  such that $\lim_{n\rightarrow\infty}\|x_n-Sx_n\|=0$. $\hat{F}(S)$ denotes the set of asymptotic fixed points of $S$ (see \cite{ycsr, srei}).
The mapping $T: C\rightarrow C$ is  called
 Bregman nonexpansive if $D_f (Tx, Ty)\leq D_f(x, y)$, for all $x, y\in C$.
\begin{lem}\cite{egrm}\label{arg}
Let $C$ be a nonempty closed convex subset of a reflexive Banach
space $X$, and $f : X \rightarrow \mathbb{R}$ be a Legendre and strongly coercive function. Suppose
that $g : C \times C \rightarrow \mathbb{R}$ be a bifunction satisfying A2-A5. For the arbitrary sequences $\{x_n\} \subset C$ and $\{\lambda_n\}\subset(0, \infty)$, let $\{y_n\}$ and $\{z_n\}$ be sequences
generated by
\begin{equation}\label{algo3}
    \left\{
    \begin{array}{lr}

       y_n=argmin\{\lambda_n g(x_n, y) +D_f(y, x_n):y\in C\},\\
       z_n= argmin\{\lambda_n g(y_n, y) +D_f(y, x_n):y\in C\}.
       \end{array} \right.
         \end{equation}
          Then, for all $x^*\in EP(g)$\\
          $D_f(x^*, z_n) \leq D_f(x^*, x_n) - (1 - \lambda_nc_1)D_f(y_n, x_n) - (1 - \lambda_nc_2)D_f(z_n, y_n)$.
\end{lem}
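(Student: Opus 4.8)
The plan is to convert the two minimization subproblems in \eqref{algo3} into variational inequalities via the optimality conditions, and then to reassemble the claimed estimate using the three-point identity for the Bregman distance together with the Bregman--Lipschitz hypothesis (A3).

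\textbf{Step 1: variational inequalities.} For fixed $x_n \in C$ and $\lambda_n > 0$ the function $y \mapsto \lambda_n g(x_n, y) + D_f(y, x_n)$ is convex (by (A5) and convexity of $f$) and subdifferentiable on $C$; moreover, since $f$ is Legendre it is G\^ateaux differentiable, so $D_f(\cdot, x_n)$ is differentiable with derivative $\nabla f(\cdot) - \nabla f(x_n)$. Applying the optimality condition Lemma~\ref{2.11} at the minimizer $y_n$, together with the subdifferential sum rule Lemma~\ref{2.12}, produces $w_n$ in the subdifferential of $g(x_n, \cdot)$ at $y_n$ and $v_n \in N_C(y_n)$ with $\lambda_n w_n + \nabla f(y_n) - \nabla f(x_n) + v_n = 0$. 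Combining the subgradient inequality for $w_n$ with the defining inequality of $N_C(y_n)$ for $v_n$ then gives
\begin{equation}\label{vi-y}
\langle y - y_n,\, \nabla f(x_n) - \nabla f(y_n)\rangle \le \lambda_n ( g(x_n, y) - g(x_n, y_n) ), \qquad \forall\, y \in C,
\end{equation}
and the same reasoning applied to the subproblem defining $z_n$ yields
\begin{equation}\label{vi-z}
\langle y - z_n,\, \nabla f(x_n) - \nabla f(z_n)\rangle \le \lambda_n ( g(y_n, y) - g(y_n, z_n) ), \qquad \forall\, y \in C.
\end{equation}

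\textbf{Step 2: the three-point identity.} From \eqref{Df} one reads off the identity $D_f(a, c) = D_f(a, b) + D_f(b, c) + \langle a - b, \nabla f(b) - \nabla f(c)\rangle$ for all $a,b,c \in X$. Applying it with $(a, b, c) = (x^*, z_n, x_n)$ and estimating the resulting inner product by \eqref{vi-z} with $y = x^* \in C$ gives
\begin{equation}\label{est1}
D_f(x^*, z_n) \le D_f(x^*, x_n) - D_f(z_n, x_n) + \lambda_n ( g(y_n, x^*) - g(y_n, z_n) );
\end{equation}
since $x^* \in EP(g)$, \eqref{EP} gives $g(y_n, x^*) \le 0$, so the term $\lambda_n g(y_n, x^*)$ is discarded. \textbf{Step 3: splitting $D_f(z_n, x_n)$.} Apply the three-point identity once more with $(a, b, c) = (z_n, y_n, x_n)$, bound the inner product it produces by \eqref{vi-y} with $y = z_n \in C$, and substitute into \eqref{est1}, which leaves
\begin{equation*}
D_f(x^*, z_n) \le D_f(x^*, x_n) - D_f(z_n, y_n) - D_f(y_n, x_n) + \lambda_n ( g(x_n, z_n) - g(x_n, y_n) - g(y_n, z_n) ).
\end{equation*}
Finally, (A3) rewritten as $g(x_n, z_n) - g(x_n, y_n) - g(y_n, z_n) \le c_1 D_f(y_n, x_n) + c_2 D_f(z_n, y_n)$, multiplied by $\lambda_n > 0$, collapses the last line into $- (1 - \lambda_n c_1) D_f(y_n, x_n) - (1 - \lambda_n c_2) D_f(z_n, y_n)$, which is exactly the asserted bound.

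\textbf{Expected difficulty.} The routine part is the bookkeeping in Steps 2--3. The point requiring care is Step 1: one must check that Lemma~\ref{2.12} is applicable — which it is, because $D_f(\cdot, x_n)$ is continuous on $X$ ($f$ being a finite-valued lower semicontinuous convex function) and $g(x_n, \cdot)$ is finite on $C$ — and one must track signs correctly when passing from the equality $\lambda_n w_n + \nabla f(y_n) - \nabla f(x_n) + v_n = 0$ through the normal-cone inequality $\langle y_n - y, v_n\rangle \ge 0$ and the subgradient inequality to obtain \eqref{vi-y} and \eqref{vi-z}. (Existence and uniqueness of the minimizers $y_n, z_n$ is presupposed in the statement, coming from the coercivity/total convexity of $f$ as in the results cited in the introduction.)
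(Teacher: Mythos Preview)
The paper does not supply its own proof of this lemma; it is quoted from \cite{egrm}. Your argument is correct and is in fact the standard one from that reference: the derivation of the variational inequalities in your Step~1 via Lemmas~\ref{2.11} and~\ref{2.12} and the computation $\partial_1 D_f(\cdot,x_n)=\{\nabla f(\cdot)-\nabla f(x_n)\}$ is exactly what the present paper reproduces later (around \eqref{zta}--\eqref{ineq}) when it shows that a weak cluster point lies in $EP(g)$, and the remainder (the three-point identity plus (A3)) is the routine bookkeeping you describe.
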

\begin{lem}\label{an+1<an}\cite{xuhhk}
Let $\{a_n\}$ be a sequence of nonnegative real numbers satisfying
the inequality:
\begin{equation*}
  a_{n+1}\leq(1-\alpha_n)a_n+\alpha_n\sigma_n, \;\; \forall\geq 0,
\end{equation*}
where
  \begin{enumerate}
    \item [(a)] $\{\alpha_n\}\subset [0, 1]$ and $\sum_{n=0}^{\infty}\alpha_n=\infty$,
    \item [(b)] $\limsup_{n\rightarrow \infty}\sigma_n\leq0, or \sum_{n=0}^{\infty}|\alpha_n\sigma_n|<\infty$.
  \end{enumerate}
Then, $\lim_{n\rightarrow \infty}a_n=0$.
\end{lem}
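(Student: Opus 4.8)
The plan is to prove the conclusion separately under each of the two alternatives in hypothesis (b), both of which rest on the elementary fact that, since $\alpha_k\in[0,1]$ and $\sum_{k=0}^{\infty}\alpha_k=\infty$, for every fixed $n$ we have $\prod_{k=n}^{m-1}(1-\alpha_k)\to 0$ as $m\to\infty$. This follows at once from $1-x\le e^{-x}$ on $[0,1]$, which gives $\prod_{k=n}^{m-1}(1-\alpha_k)\le\exp\!\big(-\sum_{k=n}^{m-1}\alpha_k\big)\to 0$ (with the product being $0$ outright past the first index, if any, at which $\alpha_k=1$).

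First I would handle the case $\limsup_{n\to\infty}\sigma_n\le 0$. Fix $\varepsilon>0$ and pick $N$ with $\sigma_n\le\varepsilon$ for all $n\ge N$. Setting $b_n:=a_n-\varepsilon$, the recursion gives, for $n\ge N$,
\[
b_{n+1}\le(1-\alpha_n)a_n+\alpha_n\varepsilon-\varepsilon=(1-\alpha_n)b_n .
\]
Since $1-\alpha_n\ge 0$, the number $b_n^{+}:=\max\{b_n,0\}$ then satisfies $b_{n+1}^{+}\le(1-\alpha_n)b_n^{+}$ for $n\ge N$ (if $b_n<0$ then $b_{n+1}\le 0$ and $b_{n+1}^{+}=0$; if $b_n\ge 0$ this is immediate), so $b_m^{+}\le\big(\prod_{k=N}^{m-1}(1-\alpha_k)\big)b_N^{+}\to 0$ as $m\to\infty$. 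Hence $\limsup_{m\to\infty}a_m\le\varepsilon$, and letting $\varepsilon\downarrow 0$, together with $a_m\ge 0$, yields $\lim_{m\to\infty}a_m=0$.

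Next, under $\sum_{n=0}^{\infty}|\alpha_n\sigma_n|<\infty$, I would iterate the inequality $a_{n+1}\le(1-\alpha_n)a_n+|\alpha_n\sigma_n|$ (valid since $\alpha_n\ge 0$) to get, for $m>n$,
\begin{align*}
a_m &\le\Big(\prod_{k=n}^{m-1}(1-\alpha_k)\Big)a_n+\sum_{j=n}^{m-1}\Big(\prod_{k=j+1}^{m-1}(1-\alpha_k)\Big)|\alpha_j\sigma_j|\\
&\le\Big(\prod_{k=n}^{m-1}(1-\alpha_k)\Big)a_n+\sum_{j=n}^{\infty}|\alpha_j\sigma_j| ,
\end{align*}
where each partial product is $\le 1$. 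Keeping $n$ fixed and letting $m\to\infty$, the first term vanishes by the fact recalled above, so $\limsup_{m\to\infty}a_m\le\sum_{j=n}^{\infty}|\alpha_j\sigma_j|$. Since the tails of a convergent series tend to $0$, letting $n\to\infty$ gives $\limsup_{m\to\infty}a_m\le 0$; with $a_m\ge 0$ we again conclude $\lim_{m\to\infty}a_m=0$.

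The argument is entirely elementary; the only points needing a little care are the sign bookkeeping in the first case — dealt with by passing to $b_n^{+}$, which is legitimate exactly because the coefficients $1-\alpha_n$ are nonnegative — and the observation in the second case that no a priori boundedness of $\{a_n\}$ is needed, since $a_n$ enters only through the fixed scalar $\prod_{k=n}^{m-1}(1-\alpha_k)$, which tends to $0$ for each fixed $n$.
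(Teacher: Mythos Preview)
Your argument is correct. Note, however, that the paper does not supply its own proof of this lemma: it is quoted verbatim from Xu~\cite{xuhhk} as a known tool, so there is nothing in the paper to compare your approach against. What you have written is essentially the standard proof of Xu's lemma --- splitting on the two alternatives in (b), using $1-x\le e^{-x}$ together with $\sum\alpha_n=\infty$ to force the product $\prod_{k=n}^{m-1}(1-\alpha_k)$ to vanish, and in the first case handling the sign issue by passing to $(a_n-\varepsilon)^{+}$. Both cases are argued cleanly and the minor subtleties (nonnegativity of $1-\alpha_n$ to justify the $b_n^{+}$ step; fixing $n$ before sending $m\to\infty$ in the second case) are addressed.
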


\begin{lem}\label{2.18}\cite{mpe}
Let $\{a_n\}$ be a sequence of real numbers such that there exists a subsequence $\{n_i\}$ of $\{n\}$ such that $a_{n_i} < a_{n_i+1}$ for all $i\in \mathbb{N}$. Then, there exists a subsequence $\{m_k\} \subset \mathbb{N}$ such that $m_k \rightarrow\infty$ and the following properties are satisfied by all (sufficiently large) numbers $k \in \mathbb{N}$ :
  \begin{equation*}
    a_{m_k} \leq a_{m_k+1}\;\; and \;\; a_k \leq a_{m_k+1}.
  \end{equation*}
In fact, $m_k = \max\{j \leq k : a_j < a_{j+1}\}$.
\end{lem}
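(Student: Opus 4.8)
The plan is to take $m_k$ to be exactly the index named in the statement, namely $m_k := \max\{j \leq k : a_j < a_{j+1}\}$, and to verify directly that this choice enjoys every property claimed. First I would record that, by hypothesis, the set $J := \{j \in \mathbb{N} : a_j < a_{j+1}\}$ contains the subsequence $\{n_i\}$ and is therefore infinite; fixing some $n_1 \in J$, for every $k \geq n_1$ the set $J \cap \{1,\dots,k\}$ is nonempty (it contains $n_1$) and finite, so $m_k$ is well defined and satisfies $n_1 \leq m_k \leq k$. To see that $m_k \to \infty$, given any bound $M$ the infiniteness of $J$ supplies some $n_i \in J$ with $n_i > M$, and then $m_k \geq n_i > M$ for every $k \geq n_i$, since $n_i \in J \cap \{1,\dots,k\}$; one also notes that $k \mapsto m_k$ is nondecreasing, though this is not strictly needed.

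Next I would deduce the two displayed inequalities from the definition of $m_k$. Since $m_k \in J$ we have $a_{m_k} < a_{m_k+1}$, which in particular gives the first inequality $a_{m_k} \leq a_{m_k+1}$. For the second, I would split into cases: if $m_k = k$ the first inequality is already the second; if $m_k < k$, then by maximality of $m_k$ every index $j$ with $m_k < j \leq k$ lies outside $J$, so $a_j \geq a_{j+1}$ for $j = m_k+1, m_k+2, \dots, k-1$, and chaining these yields $a_{m_k+1} \geq a_{m_k+2} \geq \cdots \geq a_k$, i.e. $a_k \leq a_{m_k+1}$. Since all of this is valid for every $k \geq n_1$, the conclusion holds for all sufficiently large $k$.

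The argument has no real obstacle; it is elementary bookkeeping on indices. The one point that needs a little care is the telescoping step in the case $m_k < k$: one must check that each intermediate index $j$ really satisfies $m_k < j \leq k$, so that the maximality of $m_k$ genuinely forces $a_j \geq a_{j+1}$ there. The borderline case $m_k = k-1$ makes that chain empty, with $a_k = a_{m_k+1}$, which is consistent with the claimed inequality.
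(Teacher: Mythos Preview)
Your argument is correct and is exactly the standard proof of Maing\'e's lemma: define $m_k$ as the stated maximum, use infiniteness of $J=\{j:a_j<a_{j+1}\}$ to get well-definedness and $m_k\to\infty$, obtain $a_{m_k}<a_{m_k+1}$ from $m_k\in J$, and telescope $a_{m_k+1}\ge\cdots\ge a_k$ via the maximality of $m_k$. The paper does not prove this lemma but merely cites it from \cite{mpe}, where precisely this argument appears; your write-up matches it.
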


We assume that $\varphi : C\times C\rightarrow\mathbb{R}$ is a bifunction satisfying
the following conditions:
\begin{enumerate}
  \item [(B1)] $\varphi(x, x)=0$, for all $x\in C$.
  \item [(B2)] $\varphi$ is monotone, i.e. $\varphi(x, y) + \varphi(y, x)\leq 0$, for all $x, y\in C$.
  \item [(B3)] $\displaystyle\lim_{t\downarrow 0}\varphi(tz + (1-t)x, y)\leq \varphi(x, y)$, for all $x, y, z\in C$.
  \item [(B4)] for all $x\in C$, $y\mapsto \varphi(x, y)$ is convex and lower semicontinuous.
\end{enumerate}
The resolvent of $\varphi$ is the operator $Res^f_\varphi : X\rightarrow 2^C$ defined by \cite{Combettes, rssst}
\begin{equation}\label{res2}
  Res^f_\varphi x=\{z\in C: \varphi(z, y) + \langle \nabla f(z) - \nabla f(x), y-z\rangle\geq 0, \;\;\forall y\in C\}.
\end{equation}
If $f: X\rightarrow (-\infty, \infty]$ be a G\^{a}teaux differentiable and strongly
coercive function and $\varphi$ satisfies conditions B1 - B4. Then $dom$ $Res^f_\varphi = X$ (see \cite[Lemma 1]{rssst}).
\begin{lem}\label{res3}
\cite{rssst}
Suppose that $X$ is a real reflexive Banach space and $C$ a nonempty closed convex subset of $X$. Let $f : X\rightarrow (-\infty, \infty]$ be a Legendre function. If $\varphi$ be a bifunction from  $C\times C$ to $\mathbb{R}$ satisfies B1 - B4, then the followings
hold:
\begin{enumerate}
  \item [(i)] $Res^f_\varphi$ is single-valued.
  \item [(ii)] $Res^f_\varphi$ is a Bregman firmly nonexpansive operator.
  \item [(iii)] $F(Res^f_\varphi)= EP(\varphi)$.
  \item [(iv)]  $EP(\varphi)$ is a closed and convex subset of $C$.
  \item [(v)]  $ D_f(u, x)\geq D_f(u, Res^f_\varphi(x)) + D_f(Res^f_\varphi(x), x)$, for all $x\in X$ and $u\in F(Res^f_\varphi)$.
\end{enumerate}
\end{lem}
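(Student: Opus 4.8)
The plan is to obtain all five assertions from a single pair of tools: the inequality defining $Res^f_\varphi$ together with the monotonicity B2, and a Minty-type reformulation of the equilibrium problem coming from B1, B3, B4. No existence statement is needed at this point -- that is exactly the content of the separately cited \cite[Lemma~1]{rssst} -- so each item below is a statement about points already known to lie in the relevant sets. The Minty-type lemma I would prove first reads: for $u\in C$, one has $\varphi(u,y)\ge 0$ for all $y\in C$ if and only if $\varphi(y,u)\le 0$ for all $y\in C$. The forward direction is immediate from B2. For the converse, given $y\in C$ set $y_t=ty+(1-t)u\in C$ for $t\in(0,1]$; combining $\varphi(y_t,u)\le 0$, convexity of $\varphi(y_t,\cdot)$ (B4) and $\varphi(y_t,y_t)=0$ (B1) gives $0\le t\,\varphi(y_t,y)$, whence $\varphi(y_t,y)\ge 0$, and letting $t\downarrow 0$ with B3 yields $\varphi(u,y)\ge 0$. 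Thus $EP(\varphi)$ admits the two descriptions $\{u\in C:\varphi(u,y)\ge 0\ \forall y\in C\}=\{u\in C:\varphi(y,u)\le 0\ \forall y\in C\}$.

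Assertion (iv) then follows at once: from the second description, $EP(\varphi)=\bigcap_{y\in C}\{u\in C:\varphi(y,u)\le 0\}$, an intersection of sets that are convex and closed because each $\varphi(y,\cdot)$ is convex and lower semicontinuous (B4) and $C$ is closed and convex. Assertion (iii) is equally quick: $u=Res^f_\varphi u$ says precisely that $\varphi(u,y)+\langle\nabla f(u)-\nabla f(u),y-u\rangle\ge 0$ for all $y\in C$, i.e. $\varphi(u,y)\ge 0$ for all $y$, which by the first description is $u\in EP(\varphi)$; conversely such a $u$ clearly satisfies the resolvent inequality with $x=u$.

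For (i), (ii), (v) I would use repeatedly a two-inequality device. Given $z_1,z_2$ produced by the resolvent, I test the defining inequality for $z_1$ at $y=z_2$ and that for $z_2$ at $y=z_1$, add them, and discard $\varphi(z_1,z_2)+\varphi(z_2,z_1)\le 0$ (B2). If $z_1,z_2\in Res^f_\varphi x$ this leaves $\langle\nabla f(z_1)-\nabla f(z_2),z_1-z_2\rangle\le 0$; but this quantity equals $D_f(z_1,z_2)+D_f(z_2,z_1)\ge 0$ by the definition \eqref{Df} of $D_f$, so both distances vanish, and since $f$ is Legendre, $D_f(z_1,z_2)=0$ forces $z_1=z_2$ (by \cite[Theorem~7.3(vi)]{bhhbjmcpl}), giving (i). If instead $z_i=Res^f_\varphi x_i$, the same computation yields the Bregman firmly nonexpansive inequality $\langle\nabla f(z_1)-\nabla f(z_2),z_1-z_2\rangle\le\langle\nabla f(x_1)-\nabla f(x_2),z_1-z_2\rangle$, which is (ii) (it may be rewritten in $D_f$-form through the identity $D_f(u,x)=D_f(u,z)+D_f(z,x)+\langle\nabla f(z)-\nabla f(x),u-z\rangle$). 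For (v), put $z=Res^f_\varphi x$ and take $u\in F(Res^f_\varphi)=EP(\varphi)$; then $\varphi(u,z)\ge 0$, hence $\varphi(z,u)\le 0$ by B2, and testing the resolvent inequality at $y=u$ gives $\langle\nabla f(z)-\nabla f(x),u-z\rangle\ge-\varphi(z,u)\ge 0$. Substituting into the same three-point identity yields $D_f(u,x)\ge D_f(u,z)+D_f(z,x)$.

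The only genuinely delicate point I anticipate is the Minty-type lemma: it is the one step where B1, B3 and B4 must be combined, and where the limit $t\downarrow 0$ must be handled carefully so as to keep the inequality $\varphi(y_t,y)\ge 0$ in the limit. Everything after that is a mechanical combination of the resolvent inequality with B2 and the three-point identity for $D_f$, together with one appeal to the Legendre hypothesis to pass from $D_f(z_1,z_2)=0$ to $z_1=z_2$.
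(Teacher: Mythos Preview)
Your proposal is correct: the Minty-type reformulation (using B1, B3, B4) gives (iv) and (iii), and the ``add two resolvent inequalities and use B2'' device, combined with the three-point identity $D_f(u,x)=D_f(u,z)+D_f(z,x)+\langle\nabla f(z)-\nabla f(x),u-z\rangle$ and the Legendre property, yields (i), (ii), (v) exactly as you outline. Note, however, that the paper itself supplies no proof of this lemma; it is quoted verbatim from Reich--Sabach \cite{rssst}, and your argument is essentially the standard one found there (and, in the Hilbert setting, in Combettes--Hirstoaga \cite{Combettes}), so there is nothing to compare against beyond confirming that you have reproduced the original proof faithfully.
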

  \section{Main results}
 \begin{thm}\label{maina}
Let $C$ be a nonempty closed convex subset of a real reflexive Banach
space $X$, and let $f : X \rightarrow \mathbb{R}$ be an admissible, strongly coercive Legendre function which
is bounded, uniformly Fr\'{e}chet differentiable and totally convex on bounded
subsets of $X$. Let  $g : C\times C \rightarrow \mathbb{R}$ be a bifunction satisfying
A1-A5. Assume that $S : C \rightarrow C$ is a Bregman nonexpansive mapping with $\hat{F}(S)=F(S)$. Let $\varphi : C\times C\rightarrow\mathbb{R}$ be a bifunction satisfying
 B1-B4 and $\Omega =F(S)\cap EP(g) \cap F(Res^f_\varphi)\neq\emptyset$. Suppose that $\{x_n\}$ is a sequence generated by $x_1\in C$, $u\in X$ and

    \begin{equation}\label{algo3}
    \left\{
    \begin{array}{lr}

       y_n=argmin\{\lambda_n g(x_n, y) +D_f(y, x_n):y\in C\},\\
       z_n= argmin\{\lambda_n g(y_n, y) +D_f(y, x_n):y\in C\},\\
      % t_n=argmin\{\lambda_n g(z_n, y) +D_f(y, x_n):y\in C\},\\
       %w_n=\nabla f^*(\gamma_{n,1}\nabla f(x_n)+\gamma_{n,2}\nabla f(z_n) +\gamma_{n,3}\nabla f(Sz_n)),\\
       v_n=\nabla f^*(\delta_n\nabla f(Res^f_\varphi x_n)+(1-\delta_n)\nabla f(Res^f_\varphi z_n)),\\
       w_n=\nabla f^*(\gamma_{n,1}\nabla f(v_n)+\gamma_{n,2}\nabla f(z_n) +\gamma_{n,3}\nabla f(Sz_n)),\\
       u_n\in C \;\;such\;that\\
       \varphi(u_n, y) + \langle \nabla f(u_n)-\nabla f(w_n), y-u_n\rangle\geq 0,\;\;\; \forall\;\; y\in C,\\
   %    C_{n+1}=\{v\in C_n: D_f(v, u_n)\leq D_f(v, x_n)\},\\
 %      D_n=\{z\in C: \langle x_n-z, x_0 -x_n\rangle \geq 0\},\\
        k_n=\nabla f^*(\beta_n\nabla f(w_n)+(1-\beta_n)\nabla f(Sw_n)),\\
        h_n=\nabla f^* (\alpha_{n,1}\nabla f( u)+\alpha_{n,2}\nabla f(x_n)+\alpha_{n,3}\nabla f(u_n)+\alpha_{n,4}\nabla f(k_n)),\\
       x_{n+1}=\overleftarrow{Proj}_C^f h_n.
    \end{array} \right.
   \end{equation}
   where $\{\delta_n\}\subset(0, 1)$ and $\{\alpha_{n,i}\}^4_{i=1}$,  $\{\beta_n\}$ and $\{\lambda_n\}$ satisfy the following conditions:
     \begin{enumerate}
       \item [\emph{(i)}] $\{\alpha_{n,i}\}\subset (0, 1), \sum_{i=1}^{4}\alpha_{n,i}=1,  \lim_{n\rightarrow \infty}\alpha_{n,1}=0,$ $ \Sigma_{n=1}^{\infty}\alpha_{n,1}=\infty$, $\liminf_{n\rightarrow\infty} \alpha_{n,i}\alpha_{n,j}>0$ for all $i\neq j$ and $2\leq i,j\leq3$.
       \item [\emph{(ii)}] $\{\gamma_{n,i}\}\subset(0, 1)$, $\gamma_{n,1}+\gamma_{n,2}+\gamma_{n,3}=1$, $\liminf_{n\rightarrow\infty} \gamma_{n,i}\gamma_{n,j}>0$ for all $i\neq j$ and $1\leq i,j\leq3$.
       \item [\emph{(iii)}] $\{\lambda_n\}\subset [a, b]\subset (0, p)$, where $p=\min\{\frac{1}{c_1}, \frac{1}{c_2}\}$, $c_1, c_2$ are the Bregman-Lipschitz coefficients of $g$.
       \item [\emph{(iv)}] $\{\beta_n\}\subset (0, 1), \liminf_{n\rightarrow \infty}\beta_n(1-\beta_n)>0$.
     \end{enumerate}
     Then, the sequence $\{x_n\}$ converges strongly to $\overleftarrow{Proj}_{\Omega}^fu$.
\end{thm}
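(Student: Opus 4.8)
The plan is to carry out the standard analysis of a Bregman--Halpern iteration. First I would show every sequence in \eqref{algo3} is well defined and bounded; then derive one ``master'' inequality for $D_f(p^{\ast},x_{n+1})$ that exhibits the contraction factor $1-\alpha_{n,1}$ together with an inner-product residual and \emph{simultaneously} retains every nonnegative defect produced along the eight sub-steps; and finally close with Maing\'e's dichotomy (Lemma \ref{2.18}) and Xu's lemma (Lemma \ref{an+1<an}), using the defects to make all relevant displacements vanish and hence to place every weak cluster point in $\Omega$. For the preliminaries: since $f$ is Legendre, strongly coercive, bounded and uniformly Fr\'echet differentiable on bounded subsets, it is totally (hence, by Lemma \ref{2.10}, uniformly) convex and sequentially consistent (Lemma \ref{2.2}) on bounded subsets, $f^{\ast}$ is uniformly convex on bounded subsets (Proposition \ref{2.8}), and $\nabla f,\nabla f^{\ast}$ are uniformly norm-to-norm continuous and bounded on bounded subsets (Lemma \ref{2.1}, Proposition \ref{2.9}); write $\rho^{\ast}_r$ for the gauge of uniform convexity of $f^{\ast}$ on $B_r$. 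As $\operatorname{dom}f=X$, each of $y\mapsto\lambda_n g(x_n,y)+D_f(y,x_n)$ and $y\mapsto\lambda_n g(y_n,y)+D_f(y,x_n)$ is proper, strictly convex, lower semicontinuous and coercive on the closed convex set $C$, so $y_n,z_n$ are uniquely defined; $Res^f_\varphi$ is single-valued with full domain (Lemma \ref{res3}(i) and the hypotheses on $\varphi$), so $u_n$ is well defined; and all Bregman projections are well defined. Fixing $p\in\Omega$, Lemma \ref{arg} gives $D_f(p,z_n)\le D_f(p,x_n)$; Lemma \ref{res3}(v) gives $D_f(p,Res^f_\varphi x_n)\le D_f(p,x_n)$ and $D_f(p,Res^f_\varphi z_n)\le D_f(p,z_n)\le D_f(p,x_n)$; \eqref{df<df} gives $D_f(p,v_n)\le D_f(p,x_n)$; since $p\in F(S)$ and $S$ is Bregman nonexpansive, $D_f(p,Sz_n)\le D_f(p,z_n)$ and $D_f(p,Sw_n)\le D_f(p,w_n)$, so \eqref{df<df} gives $D_f(p,w_n)\le D_f(p,x_n)$, then $D_f(p,u_n)\le D_f(p,w_n)\le D_f(p,x_n)$ and $D_f(p,k_n)\le D_f(p,w_n)\le D_f(p,x_n)$; finally \eqref{df<df} and Lemma \ref{2.5}(iii) give $D_f(p,x_{n+1})\le D_f(p,h_n)\le\alpha_{n,1}D_f(p,u)+(1-\alpha_{n,1})D_f(p,x_n)$. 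By induction $\{D_f(p,x_n)\}$ is bounded, hence $\{x_n\}$ is bounded (Lemma \ref{2.4}), and so are $\{y_n\},\{z_n\},\{v_n\},\{w_n\},\{u_n\},\{k_n\},\{h_n\}$; fix $r>0$ so that all these sequences and their images under $\nabla f,\nabla f^{\ast}$ lie in $B_r$.

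Now set $p^{\ast}=\overleftarrow{Proj}^f_\Omega u$ and $a_n=D_f(p^{\ast},x_n)$. Repeating the chain above but retaining every defect, one obtains, with a bounded sequence of residuals,
\begin{equation*}
a_{n+1}\ \le\ (1-\alpha_{n,1})\,a_n\ +\ \alpha_{n,1}\,\big\langle h_n-p^{\ast},\ \nabla f(u)-\nabla f(p^{\ast})\big\rangle\ -\ \Phi_n ,
\end{equation*}
where $\Phi_n\ge 0$ is a weighted sum of: $(1-\lambda_n c_1)D_f(y_n,x_n)$ and $(1-\lambda_n c_2)D_f(z_n,y_n)$ (Lemma \ref{arg}); $D_f(Res^f_\varphi x_n,x_n)$, $D_f(Res^f_\varphi z_n,z_n)$, $D_f(u_n,w_n)$ (Lemma \ref{res3}(v)); $\gamma_{n,i}\gamma_{n,j}\rho^{\ast}_r(\|\nabla f(\cdot_i)-\nabla f(\cdot_j)\|)$ over the triple $\{v_n,z_n,Sz_n\}$ and $\beta_n(1-\beta_n)\rho^{\ast}_r(\|\nabla f(w_n)-\nabla f(Sw_n)\|)$ (Lemma \ref{fsum<sumf} applied to the uniformly convex $f^{\ast}$, plus Bregman nonexpansiveness of $S$); and $D_f(x_{n+1},h_n)$ (Lemma \ref{2.5}(iii)). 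Since Lemma \ref{fsum<sumf} holds for each pair, there is really one such inequality per choice of the $\gamma$-pair, which suffices. The inner-product residual comes from \eqref{vf<vf}: put $\Xi_n:=\alpha_{n,1}\nabla f(p^{\ast})+\alpha_{n,2}\nabla f(x_n)+\alpha_{n,3}\nabla f(u_n)+\alpha_{n,4}\nabla f(k_n)$, so $\nabla f(h_n)=\Xi_n+\alpha_{n,1}(\nabla f(u)-\nabla f(p^{\ast}))$; applying \eqref{vf<vf} with $x=p^{\ast}$, $x^{\ast}=\nabla f(h_n)$, $y^{\ast}=-\alpha_{n,1}(\nabla f(u)-\nabla f(p^{\ast}))$ and using \eqref{dfvf} gives $D_f(p^{\ast},h_n)=V_f(p^{\ast},\nabla f(h_n))\le V_f(p^{\ast},\Xi_n)+\alpha_{n,1}\langle h_n-p^{\ast},\nabla f(u)-\nabla f(p^{\ast})\rangle$, while \eqref{df<df} and the contraction estimates above give $V_f(p^{\ast},\Xi_n)\le(1-\alpha_{n,1})a_n-(\text{the }\Phi_n\text{-defects})$.

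For the endgame, boundedness of $\{h_n\}$ gives $a_{n+1}\le(1-\alpha_{n,1})a_n+\alpha_{n,1}M$ for some $M$, and we split into two cases. \emph{Case 1: $\{a_n\}$ is eventually non-increasing.} Then it converges, so $a_n-a_{n+1}\to 0$; since $\alpha_{n,1}\to 0$ and the residual is bounded, each defect can be isolated as ``$\Phi_n$'' in a master-type inequality, hence each tends to $0$. Using $\{\lambda_n\}\subset[a,b]\subset(0,p)$, $\liminf\gamma_{n,i}\gamma_{n,j}>0$, $\liminf\beta_n(1-\beta_n)>0$, $\liminf\alpha_{n,2}\alpha_{n,3}>0$, together with the implication $\rho^{\ast}_r(t_k)\to 0\Rightarrow t_k\to 0$ on $B_r$, sequential consistency (Lemma \ref{2.2}) and uniform continuity of $\nabla f,\nabla f^{\ast}$ on $B_r$, one gets $\|y_n-x_n\|,\|z_n-x_n\|,\|v_n-x_n\|,\|w_n-x_n\|,\|u_n-x_n\|,\|z_n-Sz_n\|\to 0$ and $\alpha_{n,4}\|\nabla f(k_n)-\nabla f(x_n)\|\to 0$; since $\nabla f(h_n)-\nabla f(x_n)=\alpha_{n,1}(\nabla f(u)-\nabla f(x_n))+\alpha_{n,3}(\nabla f(u_n)-\nabla f(x_n))+\alpha_{n,4}(\nabla f(k_n)-\nabla f(x_n))$, it follows that $\|h_n-x_n\|\to 0$, hence $\|x_{n+1}-x_n\|\to 0$. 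Take a subsequence $x_{n_k}\rightharpoonup\bar x$ realizing $\limsup_n\langle x_n-p^{\ast},\nabla f(u)-\nabla f(p^{\ast})\rangle$: then $z_{n_k}\rightharpoonup\bar x$ with $\|z_{n_k}-Sz_{n_k}\|\to 0$ gives $\bar x\in\hat F(S)=F(S)$; $u_{n_k}=Res^f_\varphi w_{n_k}\rightharpoonup\bar x$ with $\|\nabla f(u_{n_k})-\nabla f(w_{n_k})\|\to 0$, via B2, B4 gives $\varphi(y,\bar x)\le 0$ for all $y\in C$ and then via B1, B3, B4 gives $\varphi(\bar x,y)\ge 0$ for all $y\in C$, i.e. $\bar x\in EP(\varphi)=F(Res^f_\varphi)$; and Lemmas \ref{2.11}, \ref{2.12} applied to the minimization defining $y_n$ give $\xi_n\in\partial_2 g(x_n,\cdot)(y_n)$ with $\lambda_n(g(x_n,y)-g(x_n,y_n))\ge\langle\nabla f(x_n)-\nabla f(y_n),y-y_n\rangle$ for all $y\in C$, so passing to the limit along $n_k$ using $\|x_{n_k}-y_{n_k}\|\to 0$, $\lambda_{n_k}\ge a>0$, A4 and A5 yields $g(\bar x,y)\ge 0$ for all $y\in C$, whence A1 (or A2) gives $g(y,\bar x)\le 0$ for all $y\in C$, i.e. $\bar x\in EP(g)$. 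Thus $\bar x\in\Omega$, so Lemma \ref{2.5}(ii) gives $\langle\bar x-p^{\ast},\nabla f(u)-\nabla f(p^{\ast})\rangle\le 0$, hence (using $\|h_n-x_n\|\to 0$) $\limsup_n\langle h_n-p^{\ast},\nabla f(u)-\nabla f(p^{\ast})\rangle\le 0$; dropping $-\Phi_n\le 0$ and invoking Lemma \ref{an+1<an} with $\sum\alpha_{n,1}=\infty$ gives $a_n\to 0$. \emph{Case 2: $\{a_n\}$ is not eventually monotone.} By Lemma \ref{2.18} there exist $m_k\to\infty$ with $a_{m_k}\le a_{m_k+1}$ and $a_k\le a_{m_k+1}$; from $a_{m_k}\le a_{m_k+1}$ and the master inequality, $\Phi_{m_k}\to 0$, so exactly as in Case 1 all displacements vanish along $\{m_k\}$ and $\limsup_k\langle h_{m_k}-p^{\ast},\nabla f(u)-\nabla f(p^{\ast})\rangle\le 0$; then $\alpha_{m_k,1}a_{m_k}\le\alpha_{m_k,1}\langle h_{m_k}-p^{\ast},\nabla f(u)-\nabla f(p^{\ast})\rangle$ forces $\limsup_k a_{m_k}\le 0$, hence $a_{m_k}\to 0$, then $a_{m_k+1}\to 0$ from the inequality, and $a_k\le a_{m_k+1}\to 0$. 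In either case $D_f(p^{\ast},x_n)\to 0$, so by sequential consistency $\|x_n-p^{\ast}\|\to 0$, i.e. $x_n\to\overleftarrow{Proj}^f_\Omega u$.

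The main obstacle is the middle step and its use in Case 1: propagating the Bregman-distance estimate cleanly through all eight lines of \eqref{algo3} while keeping every nonnegative defect, and then matching each surviving defect to the precise hypothesis that keeps its weight away from zero. The delicate points are that neither $\delta_n$ nor $\alpha_{n,4}$ is bounded below by hypothesis, so $\|v_n-x_n\|\to 0$ cannot be read off a defect ``attached'' to $v_n$ but must be extracted from the cross term $\gamma_{n,1}\gamma_{n,2}\rho^{\ast}_r(\|\nabla f(v_n)-\nabla f(z_n)\|)$ via $\liminf\gamma_{n,1}\gamma_{n,2}>0$, and the $k_n$-block must be absorbed through the weaker statement $\alpha_{n,4}\|\nabla f(k_n)-\nabla f(x_n)\|\to 0$, which is automatic where $\alpha_{n,4}\to 0$ and is otherwise supplied by $\liminf\beta_n(1-\beta_n)>0$ along the offending subsequence. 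The remaining ingredients — the subgradient identity for $y_n$, the resolvent argument for $\varphi$, and Maing\'e's dichotomy — are routine.
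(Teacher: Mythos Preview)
Your proposal is correct and follows essentially the same route as the paper: the same chain of Bregman-distance contractions for boundedness, the same $V_f$ trick \eqref{vf<vf} to produce the Halpern residual $\alpha_{n,1}\langle h_n-p^{\ast},\nabla f(u)-\nabla f(p^{\ast})\rangle$, the same use of Lemma~\ref{fsum<sumf} for the $\rho^{\ast}_r$ defects, the same subgradient/optimality argument (Lemmas~\ref{2.11}, \ref{2.12}) to place weak cluster points in $EP(g)$, the same resolvent argument for $F(Res^f_\varphi)$, and the same Maing\'e/Xu closure. Your handling of the $k_n$-block through the weaker statement $\alpha_{n,4}\|\nabla f(k_n)-\nabla f(x_n)\|\to 0$ via a subsequence dichotomy is in fact slightly more careful than the paper, which tacitly treats $\alpha_{n,4}$ as bounded away from $0$ when deducing $\|w_n-Sw_n\|\to 0$ from \eqref{eq3}.
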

   \begin{proof}
   First, we show that $\Omega$ is a closed and convex subset of $C$.
 It follows from  \cite[Lemma $2.14$]{egrm}  that  $EP(g)$ is a convex and weakly closed (so closed) subset of $C$. By the conditions (iii) and (iv) of  Lemma \ref{res3}, $F(Res^f_\varphi)$ is a closed and convex subset of $C$. Also, similar to \cite[Proposition $3.1$]{snzh} $F(S)$ is a closed and convex subset of $C$.
    Hence, $\Omega$ is closed and convex subset of $C$.

 Let $\hat{u}=\overleftarrow{Proj}_{\Omega}^fu$. By \eqref{df<df}, Lemma \ref{arg} (v) of Lemma \ref{res3}, we have
     \begin{align}\label{res1}
       D_f(\hat{u}, v_n)  =& D_f(\hat{u}, \nabla f^*(\delta_n\nabla f(Res^f_\varphi x_n)+(1-\delta_n)\nabla f(Res^f_\varphi z_n))\nonumber\\
       \leq&  \delta_nD_f(\hat{u}, Res^f_\varphi x_n)  + (1-\delta_n)D_f(\hat{u}, Res^f_\varphi z_n)\nonumber\\
       \leq&  \delta_nD_f(\hat{u}, x_n)  + (1-\delta_n)D_f(\hat{u}, z_n)\nonumber\\
       \leq& D_f(\hat{u}, x_n).
     \end{align}
Then from \eqref{df<df}, \eqref{res1}, Lemma \ref{arg} and the Bregman nonexpansiveness of $S$, we conclude that
    \begin{align}\label{wnxn0}
       D_f(\hat{u}, w_n)  =& D_f(\hat{u}, \nabla f^*(\gamma_{n,1}\nabla f(v_n)+\gamma_{n,2}\nabla f(z_n) +\gamma_{n,3}\nabla f(Sz_n)))\nonumber \\
       \leq &\gamma_{n,1}D_f( \hat{u}, v_n)+\gamma_{n,2}D_f( \hat{u}, z_n)+\gamma_{n,3}D_f(\hat{u}, Sz_n)\nonumber\\
       \leq&\gamma_{n,1}D_f( \hat{u}, v_n)+\gamma_{n,2}D_f( \hat{u}, z_n)+\gamma_{n,3}D_f(\hat{u}, z_n)\nonumber\\
       \leq&D_f( \hat{u}, x_n).
    \end{align}
    It follows from  \eqref{df<df} and the Bregman nonexpansiveness of $S$ that
    \begin{align}\label{knxn0}
       D_f(\hat{u}, k_n)  =& D_f(\hat{u}, \nabla f^*(\beta_n\nabla f(w_n)+(1-\beta_n)\nabla f(Sw_n) ))\nonumber \\
       \leq &\beta_nD_f( \hat{u}, w_n)+(1-\beta_n)D_f( \hat{u}, Sw_n)\nonumber\\
       \leq&\beta_nD_f( \hat{u}, w_n)+(1-\beta_n)D_f( \hat{u}, w_n)\nonumber\\
       \leq&D_f( \hat{u}, w_n).
    \end{align}
  Therefore, from  \eqref{wnxn0},
  \begin{equation}\label{kn<xn}
    D_f(\hat{u}, k_n)\leq D_f(\hat{u}, x_n).
  \end{equation}

By \eqref{res2} and the algorithm \eqref{algo3}, we obtain that $u_n\in Res^f_\varphi w_n$. It follows from Lemma \ref{res3} that $Res^f_\varphi$ is single valued, hence $u_n=Res^f_\varphi w_n$.  So from part (v) of Lemma \ref{res3}, we have
\begin{equation}\label{res4}
  D_f(\hat{u}, u_n)=D_f(\hat{u}, Res^f_\varphi w_n)\leq D_f(\hat{u}, w_n).
\end{equation}
 Then, from \eqref{df<df}, \eqref{knxn0}, \eqref{res4} and part (iii) of Lemma \ref{2.5}, we obtain
         \begin{align}\label{decrsing}
           D_f(\hat{u}, x_{n+1})  \leq& D_f(\hat{u}, h_n)\nonumber\\
            =&D_f(\hat{u}, \nabla f^* (\alpha_{n,1}\nabla f( u)
           + \alpha_{n,2}\nabla f(x_n)+\alpha_{n,3}\nabla f(u_n)+\alpha_{n,4}\nabla f(k_n)))\nonumber \\
           \leq & \alpha_{n,1} D_f(\hat{u}, u)+ \alpha_{n,2}D_f(\hat{u},x_n)+(\alpha_{n,3} +\alpha_{n,4})D_f(\hat{u}, w_n).
          \end{align}
Therefore, it follows from \eqref{wnxn0} that
           \begin{align*}\label{eq3}
              D_f(\hat{u}, x_{n+1})\leq& \alpha_{n,1} D_f(\hat{u}, u)+ (1-\alpha_{n,1})D_f(\hat{u},x_n)\\
              \leq&\max\{D_f(\hat{u}, u), D_f(\hat{u}, x_n)\},
             \end{align*}
hence, by the induction process, we conclude that
\begin{equation*}
   D_f(\hat{u}, x_{n+1})\leq \max\{D_f(\hat{u}, u), D_f(\hat{u}, x_1)\}.
\end{equation*}
Now from the above, we have that the sequence $\{D_f(\hat{u}, x_n)\}$ is bounded. From Lemma \ref{2.10}, $f$ is a uniformly convex function on bounded subsets. Hence, the condition (i) of Proposition \ref{2.9} holds, equivalently, the condition (ii) of the proposition holds, i.e., $f^*$ is bounded on bounded subsets of $X^*$. Thus, $\nabla f^*$ is also bounded on bounded subsets of $X^*$ (see \cite[Proposition 7.8]{bhhbjm}). From Lemma \ref{2.4}, we conclude that the sequence $\{x_n\}$ is bounded. It follows from \eqref{res1}, \eqref{wnxn0}, \eqref{res4}, Lemma \ref{arg} and boundednes of $\{D_f(\hat{u}, x_n)\}$ that $\{D_f(\hat{u}, z_n)\}$, $\{D_f(\hat{u}, v_n)\}$, $\{D_f(\hat{u}, w_n)\}$ and $\{D_f(\hat{u}, u_n)\}$ are bounded, hence from Lemma \ref{2.4}, we conclude that the sequences $\{z_n\}$, $\{v_n\}$, $\{w_n\}$ and $\{u_n\}$ are bounded.
Hence  $\{\nabla f(z_n)\}$, $\{\nabla f(v_n)\}$ and $\{\nabla f(w_n)\}$ are bounded (see \cite[Proposition 1.1.11]{bdian}). It follows from algorithm \eqref{algo3} that
$\gamma_{n,3}\nabla f(Sz_n)=\nabla f(w_n)-\gamma_{n,1}\nabla f(v_n)-\gamma_{n,2}\nabla f(z_n)$. Then from (ii), $\{\nabla f(Sz_n)\}$ is bounded.

Since $f$ is uniformly Fr\'{e}chet differentiable and bounded on bounded subsets of $X$, then from Lemma \ref{2.1}, $\nabla f$ is uniformly norm-to-norm continuous
on bounded subsets of $X$. Also from the assumption that $f$ is a convex and strongly coercive function, we may conclude that the condition (ii) of Proposition \ref{2.8} holds, equivalently, the condition (iii) of the proposition holds, i.e., $f^*$ is uniformly convex on bounded subsets of $X^*$. Let $r_1= \displaystyle\sup_n\{\|\nabla f(z_n)\|, \|\nabla f(Sz_n)\|, \|\nabla f(v_n)\|\}$. Since $\{\nabla f(z_n)\}$, $\{\nabla f(Sz_n)\}$ and $\{\nabla f(v_n)\}$ are  bounded sequences then $r_1<\infty$.
 Hence, by \eqref{vf}, \eqref{dfvf}, \eqref{res1}, Lemmas \ref{fsum<sumf}, \ref{arg} and the Bregman nonexpansiveness of $S$, we have that
     \begin{align}\label{wnxn}
       D_f(\hat{u}, w_n)  =& V_f(\hat{u}, \gamma_{n,1}\nabla f(v_n)+\gamma_{n,2}\nabla f(z_n) +\gamma_{n,3}\nabla f(Sz_n))\nonumber \\
       =&  f(\hat{u}) - \langle\hat{u}, \gamma_{n,1}\nabla f(v_n)+\gamma_{n,2}\nabla f(z_n) +\gamma_{n,3}\nabla f(Sz_n) \rangle \nonumber\\
       &+ f^*(\gamma_{n,1}\nabla f(v_n)+\gamma_{n,2}\nabla f(z_n) +\gamma_{n,3}\nabla f(Sz_n))\nonumber \\
       \leq &f(\hat{u})-\gamma_{n,1}\langle\hat{u}, \nabla f(v_n) \rangle- \gamma_{n,2}\langle \hat{u}, \nabla f(z_n)\rangle -\gamma_{n,3}\langle \hat{u}, \nabla f(Sz_n)\rangle\nonumber \\
       &+\gamma_{n,1}f^*(\nabla f(v_n))+\gamma_{n,2}f^*(\nabla f(z_n))+\gamma_{n,3}f^*(\nabla f(Sz_n))\nonumber\\
       &- \gamma_{n,2}\gamma_{n,3}\rho_{r_1}^{*}(\|\nabla f(z_n) -\nabla f(Sz_n)\|)\nonumber\\
       = &\gamma_{n,1}V_f( \hat{u}, \nabla f(v_n))+ \gamma_{n,2}V_f( \hat{u}, \nabla f(z_n))+\gamma_{n,3}V_f(\hat{u}, \nabla f(Sz_n))\nonumber\\
       &- \gamma_{n,2}\gamma_{n,3}\rho_{r_1}^{*}(\|\nabla f(z_n) -\nabla f(Sz_n)\|)\nonumber\\
       =&\gamma_{n,1}D_f( \hat{u}, v_n)+\gamma_{n,2}D_f( \hat{u}, z_n)+\gamma_{n,3}D_f(\hat{u}, Sz_n)\nonumber\\
       &- \gamma_{n,2}\gamma_{n,3}\rho_{r_1}^{*}(\|\nabla f(z_n) -\nabla f(Sz_n)\|)\nonumber\\
       \leq&\gamma_{n,1}D_f( \hat{u}, v_n)+\gamma_{n,2}D_f( \hat{u}, z_n)+\gamma_{n,3}D_f(\hat{u}, z_n)\nonumber\\
       &- \gamma_{n,2}\gamma_{n,3}\rho_{r_1}^{*}(\|\nabla f(z_n) -\nabla f(Sz_n)\|)\nonumber\\
        \leq& D_f(\hat{u}, x_n)- \gamma_{n,2}\gamma_{n,3}\rho_{r_1}^{*}(\|\nabla f(z_n) -\nabla f(Sz_n)\|),
         \end{align}
where $\rho^*_{r_1}$ is the gauge of uniform convexity of $f^*$. Also, by repeating the above process, we obtain
\begin{equation}\label{vnzn}
  D_f(\hat{u}, w_n) \leq D_f(\hat{u}, x_n)- \gamma_{n,1}\gamma_{n,2}\rho_{r_1}^{*}(\|\nabla f(v_n) -\nabla f(z_n)\|).
\end{equation}

It follows from \eqref{knxn0} that $\{D_f(\hat{u}, k_n)\}$ is bounded, hence from Lemma \ref{2.4}, we conclude that $\{k_n\}$ is bounded. Therefore, $\{\nabla f(k_n)\}$ is bounded. Hence from $(1-\beta_n)\nabla f(Sw_n)=\nabla f(k_n)-\beta_n\nabla f(w_n)$, we obtain that $\{\nabla f(Sw_n)\}$ is bounded.
In a similar way as above, from \eqref{vf}, \eqref{dfvf}, \eqref{wnxn0} and Lemma \ref{fsum<sumf}, there exists a number $r_2= \displaystyle\sup_n\{\|\nabla f(w_n)\|, \|\nabla f(Sw_n)\|\}$ such that
\begin{equation}\label{knxn}
  D_f(\hat{u}, k_n)\leq D_f(\hat{u}, x_n)- \beta_n(1-\beta_n)\rho_{r_2}^{*}(\|\nabla f(w_n) -\nabla f(Sw_n)\|),
\end{equation}
        Now, from \eqref{decrsing} and \eqref{wnxn}, we have
          \begin{align}\label{eq2}
             D_f(\hat{u}, x_{n+1})\leq &\alpha_{n,1} D_f(\hat{u}, u)+\alpha_{n,2}D_f(\hat{u}, x_n)+(\alpha_{n,3} + \alpha_{n,4})D_f(\hat{u}, x_n)\nonumber\\
             &-(\alpha_{n,3} + \alpha_{n,4})\gamma_{n,2}\gamma_{n,3}\rho_{r_1}^{*}(\|\nabla f(z_n) -\nabla f(Sz_n)\|)\nonumber\\
             =&\alpha_{n,1} D_f(\hat{u}, u)+(1-\alpha_{n,1})D_f(\hat{u}, x_n)\nonumber\\
             &-(\alpha_{n,3} + \alpha_{n,4})\gamma_{n,2}\gamma_{n,3}\rho_{r_1}^{*}(\|\nabla f(z_n) -\nabla f(Sz_n)\|).
          \end{align}
Also, it is implied  from \eqref{decrsing} and \eqref{vnzn} that
\begin{align}\label{vnzn1}
  D_f(\hat{u}, x_{n+1})\leq&\alpha_{n,1} D_f(\hat{u}, u)+(1-\alpha_{n,1})D_f(\hat{u}, x_n)\nonumber\\
             &-(\alpha_{n,3} + \alpha_{n,4})\gamma_{n,1}\gamma_{n,2}\rho_{r_1}^{*}(\|\nabla f(v_n) -\nabla f(z_n)\|).
\end{align}
Moreover, from \eqref{df<df}, \eqref{wnxn0}, \eqref{res4}, part (iii) of Lemma \ref{2.5} and \eqref{knxn}, we obtain
          \begin{align}\label{eq3}
              D_f(\hat{u}, x_{n+1})\leq&D_f(\hat{u}, h_n)\nonumber\\
              \leq&\alpha_{n,1}D_f(\hat{u}, u)+\alpha_{n,2}D_f(\hat{u}, x_n)+\alpha_{n,3}D_f(\hat{u}, u_n)+\alpha_{n,4}D_f(\hat{u}, k_n)\nonumber\\
              \leq&\alpha_{n,1}D_f(\hat{u}, u)+(1-\alpha_{n,1})D_f(\hat{u}, x_n)\nonumber\\
              &-\alpha_{n,4}\beta_n(1-\beta_n)\rho_{r_2}^{*}(\|\nabla f(w_n) -\nabla f(Sw_n)\|).
          \end{align}
From \eqref{df<df}, \eqref{res1}, Lemma \ref{arg} and the Bregman nonexpansiveness of $S$, we have
      \begin{align}\label{wnxn1}
       D_f(\hat{u}, w_n)
       \leq&  \gamma_{n,1}D_f( \hat{u}, v_n)+\gamma_{n,2}D_f( \hat{u}, z_n)+\gamma_{n,3}D_f(\hat{u}, Sz_n)\nonumber\\
       \leq& \gamma_{n,1}D_f( \hat{u}, x_n)+(\gamma_{n,2}+\gamma_{n,3})D_f(\hat{u}, z_n)\nonumber\\
       \leq&  \gamma_{n,1}D_f( \hat{u}, x_n)+(\gamma_{n,2}+\gamma_{n,3})\big{(}D_f(\hat{u},x_n)\nonumber\\
       &-(1-\lambda_nc_1)D_f(y_n, x_n)-(1-\lambda_nc_2)D_f(z_n, y_n)\big{)}\nonumber\\
       =& D_f( \hat{u}, x_n)-(\gamma_{n,2}+\gamma_{n,3})\big{(}(1-\lambda_nc_1)D_f(y_n, x_n)\nonumber\\
       &+(1-\lambda_nc_2)D_f(z_n, y_n)\big{)}.
         \end{align}
         From \eqref{decrsing} and \eqref{wnxn1}, we have
         \begin{align*}%\label{o}
            D_f(\hat{u}, x_{n+1}) \leq & \alpha_{n,1} D_f(\hat{u}, u)+\alpha_{n,2}D_f(\hat{u}, x_n)+ (\alpha_{n,3}+\alpha_{n,4})D_f(\hat{u}, w_n)\nonumber\\
           \leq & \alpha_{n,1} D_f(\hat{u}, u)+\alpha_{n,2}D_f(\hat{u}, x_n)+(\alpha_{n,3}+\alpha_{n,4})\big{[}D_f( \hat{u}, x_n)\nonumber\\
            &-(\gamma_{n,2} +\gamma_{n,3})\big{(}(1-\lambda_nc_1)D_f(y_n, x_n)+
              (1-\lambda_nc_2)D_f(z_n, y_n)\big{)}\big{]}\nonumber\\
             =&\alpha_{n,1} D_f(\hat{u}, u)+(1-\alpha_{n,1})D_f(\hat{u}, x_n)-(\alpha_{n,3}+\alpha_{n,4})
            (\gamma_{n,2}\nonumber\\ &+\gamma_{n,3})\big{(}(1-\lambda_nc_1)D_f(y_n, x_n)+
              (1-\lambda_nc_2)D_f(z_n, y_n)\big{)}.
         \end{align*}
       Thus it follows that
        \begin{align}\label{l}
          (\alpha_{n,3}+\alpha_{n,4})(\gamma_{n,2}+\gamma_{n,3})(1-&\lambda_nc_1)D_f(y_n, x_n) \nonumber\\
           \leq& \alpha_{n,1} D_f(\hat{u}, u)+D_f( \hat{u}, x_n)-D_f(\hat{u}, x_{n+1})\nonumber\\
          &-(\alpha_{n,3}+\alpha_{n,4})(\gamma_{n,2}+\gamma_{n,3})(1-\lambda_nc_2)D_f(z_n, y_n).
        \end{align}

Now by \eqref{dfvf}, \eqref{vf<vf}, \eqref{df<df}, \eqref{wnxn0}, \eqref{kn<xn},  \eqref{res4} and the part (iii) of Lemma \ref{2.5}, we conclude that
         \begin{align}\label{u,xn+1}
           D_f(\hat{u}, x_{n+1})  \leq& D_f(\hat{u}, \nabla f^* (\alpha_{n,1}\nabla f( u)+\alpha_{n,2}\nabla f(x_n)+\alpha_{n,3}\nabla f(u_n)+\alpha_{n,4}\nabla f(k_n))\nonumber\\
           =&V_f(\hat{u}, \alpha_{n,1}\nabla f( u)+\alpha_{n,2}\nabla f(x_n)+\alpha_{n,3}\nabla f(u_n)+\alpha_{n,4}\nabla f(k_n)) \nonumber\\
           \leq & V_f(\hat{u}, \alpha_{n,1}\nabla f( u)+\alpha_{n,2}\nabla f(x_n)+\alpha_{n,3}\nabla f(u_n)+\alpha_{n,4}\nabla f(k_n)\nonumber\\
           &-\alpha_{n,1}(\nabla f(u)- \nabla f(\hat{u}))) +\alpha_{n,1}\langle h_n -\hat{u}, \nabla f( u)- \nabla f(\hat{u})\rangle\nonumber\\
           =& V_f(\hat{u}, \alpha_{n,2}\nabla f(x_n)+\alpha_{n,3}\nabla f(u_n)+ \alpha_{n,4}\nabla f(k_n)+\alpha_{n,1}\nabla f(\hat{u}))\nonumber\\
            & +\alpha_{n,1}\langle h_n -\hat{u}, \nabla f( u)- \nabla f(\hat{u})\rangle\nonumber\\
            =& D_f(\hat{u}, \nabla f^*(\alpha_{n,1}\nabla f( \hat{u})+\alpha_{n,2}\nabla f(x_n)+\alpha_{n,3}\nabla f(u_n)+ \alpha_{n,4}\nabla f(k_n)))\nonumber\\
            &+\alpha_{n,1}\langle h_n -\hat{u}, \nabla f( u)- \nabla f(\hat{u})\rangle\nonumber\\
            \leq& \alpha_{n,1}D_f(\hat{u}, \hat{u})+\alpha_{n,2}D_f(\hat{u}, x_n)+ \alpha_{n,3}D_f(\hat{u}, u_n)+ \alpha_{n,4}D_f(\hat{u}, k_n)\nonumber\\
            &+\alpha_{n,1}\langle h_n -\hat{u}, \nabla f( u)- \nabla f(\hat{u})\rangle\nonumber\\
            \leq& \alpha_{n,2}D_f(\hat{u}, x_n)+ \alpha_{n,3}D_f(\hat{u}, x_n)+ \alpha_{n,4}D_f(\hat{u}, x_n)\nonumber\\
            &+\alpha_{n,1}\langle h_n -\hat{u}, \nabla f( u)- \nabla f(\hat{u})\rangle\nonumber\\
             =&(1-\alpha_{n,1})D_f(\hat{u}, x_n)+\alpha_{n,1}\langle h_n -\hat{u}, \nabla f( u)- \nabla f(\hat{u})\rangle.
         \end{align}
Also, from \eqref{df<df}, we have
         \begin{align}\label{m}
           D_f(w_n, h_n)\leq & \alpha_{n,1} D_f(w_n, u)+\alpha_{n,2}D_f(w_n, x_n)+\alpha_{n,3} D_f(w_n, u_n)+\alpha_{n,4}D_f(w_n, k_n)\nonumber\\
           \leq & \alpha_{n,1} D_f(w_n, u)+\alpha_{n,2}D_f(w_n, x_n)+\alpha_{n,3} D_f(w_n, u_n)\nonumber\\
           &+ \alpha_{n,4}\big{(}\beta_nD_f(w_n, w_n)+ (1-\beta_n)D_f(w_n, Sw_n)\big{)}\nonumber\\
            \leq& \alpha_{n,1} D_f(w_n, u)+\alpha_{n,2}D_f(w_n, x_n)+\alpha_{n,3} D_f(w_n, u_n)\nonumber\\
            &+ \alpha_{n,4}(1-\beta_n)D_f(w_n, Sw_n).
         \end{align}
We now consider the following two possible cases.

\section*{Case1}
There exists some $n_0\in \mathbb{N}$ such that $\{D_f (\hat{u}, x_n)\}$ is nonincreasing for all $n \geq n_0$. Therefore, $\displaystyle\lim_{n\rightarrow \infty}D_f (\hat{u}, x_n)$ exists and $D_f (\hat{u}, x_n) - D_f (\hat{u}, x_{n+1})\rightarrow 0$ as $n\rightarrow \infty$.
From \eqref{l}, the conditions (i), (ii) and (iii), it follows that $D_f(y_n, x_n)\rightarrow 0$ as $n\rightarrow \infty$, then by Lemma \ref{2.2} and boundedness of $\{x_n\}$, we obtain
      \begin{equation}\label{ynxn}
        \displaystyle\lim_{n\rightarrow\infty}\|y_n - x_n\|=0.
      \end{equation}
In a similar way, from \eqref{l}, we have
      \begin{equation}\label{ynzntn}
        \displaystyle\lim_{n\rightarrow\infty}\|z_n - y_n\|=0.
      \end{equation}
      By \eqref{ynxn} and \eqref{ynzntn}, we have
      \begin{equation}\label{total}
         \displaystyle\lim_{n\rightarrow\infty}\|z_n - x_n\|=0.
      \end{equation}
     It follows from \eqref{ynxn} that the sequence $\{y_n\}$ is bounded. Next, from \eqref{eq2} and the condition (i), we have
     \begin{align}\label{tnstn}
       (\alpha_{n,3}+\alpha_{n,4})\gamma_{n,2}\gamma_{n,3}&\rho_{r_1}^{*}(\|\nabla f(z_n) -\nabla f(Sz_n)\|)\nonumber\\
         \leq&\alpha_{n,1}D_f(\hat{u}, u)+(1-\alpha_{n,1})D_f(\hat{u}, x_n)- D_f(\hat{u}, x_{n+1})\nonumber\\
         \leq &\alpha_{n,1}D_f(\hat{u}, u)+D_f(\hat{u}, x_n)- D_f(\hat{u}, x_{n+1})\rightarrow 0, \;\; as\;\; n\rightarrow\infty.
     \end{align}
Hence, it follows from the conditions (i) and (ii) that $\displaystyle\lim_{n\rightarrow\infty}\rho_{r_1}^{*}(\|\nabla f(z_n) -\nabla f(Sz_n)\|)=0$.
Furthermore, we claim that $\displaystyle\lim_{n\rightarrow\infty}\|\nabla f(z_n) -\nabla f(Sz_n)\|=0$. If not, there exist
a subsequence $\{n_m\}$ of $\{n\}$ and a positive number $\epsilon_1$ such that  $\|\nabla f(z_{n_m}) -\nabla f(Sz_{n_m})\|\geq\epsilon_1$.
 Therefore $\rho_{r_1}^{*}(\|\nabla f(z_{n_m}) -\nabla f(Sz_{n_m})\|)\geq\rho_{r_1}^{*}(\epsilon_1)$ for all $m \in \mathbb{N}$, because $\rho_{r_1}^{*}$ is nondecreasing. Letting $m\rightarrow\infty$, we conclude that $\rho_{r_1}^{*}(\epsilon_1)\leq0$ which is a contradiction to the uniform convexity of $f^*$ on bounded subsets of $X^*$. Therefore $\displaystyle\lim_{n\rightarrow\infty}\|\nabla f(z_n) -\nabla f(Sz_n)\|=0$. By Proposition \ref{2.9} and Lemma \ref{2.10}, $\nabla f^*$ is uniformly norm-to-norm continuous on bounded subsets of $X^*$. Then
\begin{equation}\label{tnstn1}
  \displaystyle\lim_{n\rightarrow\infty}\|z_n-Sz_n\|=0.
\end{equation}
From \eqref{vnzn1} and \eqref{eq3}, the conditions (ii), (iv) and a similar technique as in the above, we have
\begin{equation}\label{wntwn}
  \displaystyle\lim_{n\rightarrow\infty}\|v_n-z_n\|=0\;\;\&\;\;\displaystyle\lim_{n\rightarrow\infty}\|w_n-Sw_n\|=0.
\end{equation}

Now, since $\{x_n\}$ is bounded and $X$ is reflexive, then by \cite[Theorem 1.2.14]{Takahashi}, there exists a
subsequence $\{x_{n_m}\}$ of $\{x_n\}$ and a point $q\in X$ such that $x_{n_m}\rightharpoonup q$. Therefore by \eqref{total}, $z_{n_m}\rightharpoonup q$. Using \eqref{tnstn1}, we obtain $q\in \hat{F}(S)=F(S)$.

Next, we show that $q\in EP(g)$.
For this purpose, let $x, y\in \text{int}$ $\text{dom} f$. We prove that $\nabla f(x)-\nabla f(y)\in \partial_1D_f(x, y)$ which $\partial_1 D_f(x, y)$ is the subdifferential of $D_f(x, y)$ at the first component.
We let $\vartheta:=\nabla f(x)-\nabla f(y)$. Note that
\begin{align*}
   0\geq -D_f(z, x)=& f(x)-f(z)-\langle x-z, \nabla f(x)\rangle\\
       =& f(x)-f(z)-\langle x-z, \nabla f(y)\rangle -\langle x-z, \vartheta\rangle\\
    =& f(x)-f(z)-\langle x-y, \nabla f(y)\rangle-\langle y-z, \nabla f(y)\rangle -\langle x-z, \vartheta\rangle\\
    = &f(x)-f(y)-\langle x-y, \nabla f(y)\rangle+ f(y) - f(z)\\
    &-\langle y-z, \nabla f(y)\rangle -\langle x-z, \vartheta\rangle\\
    = &D_f(x, y) - D_f(z, y)-\langle x-z, \vartheta\rangle,
\end{align*} for all $z\in \text{dom} f$. Then
\begin{equation*}\label{rond}
  D_f(x, y)+\langle z-x, \vartheta\rangle\leq D_f(z, y),
\end{equation*}
 for all $z\in \text{dom} f$.
Therefore,
 $\nabla f(x)-\nabla f(y)=\vartheta\in\partial_1 D_f(x, y)$. On the other hand, using the fact that $D_f (., x)$ is convex, differentiable and lower semicontinuous on int dom$f$, it follows from Proposition \ref{singdf} that $\partial_1D_f(x, y)=\{\nabla f(x)-\nabla f(y)\}$.
Since $y_n=argmin\{\lambda_n g(x_n, y) +D_f(y, x_n):y\in C\}$, from the condition $A5$, Lemmas \ref{2.11} and \ref{2.12}, it follows that
\begin{align*}
  0 \in \partial\{&\lambda_n g(x_n, y_n) +D_f(y_n, x_n)\}+ N_C(y_n)\\
  =&\lambda_n\partial_2 g(x_n, y_n) +\partial_1 D_f(y_n, x_n)+ N_C(y_n).
\end{align*}
Hence, there exist $\zeta_n\in\partial_2 g(x_n, y_n)$ and $\eta_n\in N_C(y_n)$ such that
\begin{equation}\label{zta}
  \lambda_n \zeta_n +\nabla f(y_n) - \nabla f(x_n) + \eta_n =0.
\end{equation}
Since $\eta_n \in N_C(y_n)$, then
\begin{equation}\label{nc}
  \langle y_n - z, \eta_n \rangle\geq 0, \;\; \forall z\in C,
\end{equation}
from \eqref{zta} and \eqref{nc}, we have
   \begin{align*}
     0\leq\langle y_n - z,&  -\lambda_n \zeta_n -\nabla f(y_n) + \nabla f(x_n) \rangle\  \\
     =& \lambda_n\langle z - y_n,  \zeta_n\rangle -\langle y_n - z, \nabla f(y_n) - \nabla f(x_n) \rangle,
   \end{align*}
for all $z\in C$. Therefore
\begin{equation}\label{ep}
  \langle y_n - z, \nabla f(y_n) - \nabla f(x_n) \rangle\leq \lambda_n\langle z - y_n,  \zeta_n\rangle.
\end{equation}
Moreover, since $\zeta_n \in \partial_2 g(x_n, y_n)$, then
\begin{equation}\label{ep1}
  g(x_n, y_n)-g(x_n, z)\leq\langle y_n - z, \zeta_n\rangle,
\end{equation}
for all $z \in C$. Now, from \eqref{ep} and \eqref{ep1}, we have
\begin{equation*}
\frac{1}{\lambda_n}[\langle y_n - z, \nabla f(y_n) - \nabla f(x_n) \rangle]\leq g(x_n, z)-g(x_n, y_n),
\end{equation*}
for all $z \in C$. Now, substituting $n_m$  instead of $n$ in the above, we conclude that
\begin{equation}\label{ineq}
\frac{1}{\lambda_{n_m}}[\langle y_{n_m} - z, \nabla f(y_{n_m}) - \nabla f(x_{n_m}) \rangle]\leq g(x_{n_m}, z)-g(x_{n_m}, y_{n_m}),
\end{equation}
for all $z \in C$. From \eqref{ynxn}, \eqref{ineq}, Lemma \ref{2.1}, the conditions (iii), $A4$ and the fact that $x_{n_m}\rightharpoonup q$ as $m\rightarrow\infty$, we have that $g(q, z)\geq 0$ for all $z\in C$, then from $A2$, we have $g(z, q)\leq 0$, for all $z\in C$, i.e., $q\in EP(g)$.

Now, we show that $q\in F(Res^f_\varphi)$.
Since $\{u_n\}$ is bounded, hence $\{\nabla f(u_n)\}$ is bounded. Let $r_3=\displaystyle\sup_n\{\|\nabla f(u)\|, \|\nabla f(x_n)\|, \|\nabla f(u_n)\|, \|\nabla f(k_n)\|\}$. Then by \eqref{vf}, \eqref{dfvf}, \eqref{wnxn0}, \eqref{kn<xn}, \eqref{res4} and Lemmas \ref{2.5}, \ref{fsum<sumf},  we have
\begin{align}\label{unxnnorm}
  D_f(\hat{u}, x_{n+1})\leq&D_f(\hat{u}, h_n)\nonumber\\
  =& V_f(\hat{u}, \alpha_{n,1}\nabla f(u)+\alpha_{n,2}\nabla f(x_n)+\alpha_{n,3}\nabla f(u_n)+\alpha_{n,4} f(k_n))\nonumber\\
  =&  f(\hat{u}) - \langle\hat{u}, \alpha_{n,1}\nabla f(u)+\alpha_{n,2}\nabla f(x_n) +\alpha_{n,3}\nabla f(u_n)+ \alpha_{n,4}\nabla f(k_n) \rangle \nonumber\\
       &+ f^*(\alpha_{n,1}\nabla f(u)+\alpha_{n,2}\nabla f(x_n) +\alpha_{n,3}\nabla f(u_n)+\alpha_{n,3}\nabla f(k_n))\nonumber \\
       \leq &f(\hat{u})-\alpha_{n,1}\langle\hat{u}, \nabla f(u) \rangle- \alpha_{n,2}\langle \hat{u}, \nabla f(x_n)\rangle -\alpha_{n,3}\langle \hat{u}, \nabla f(u_n)\rangle\nonumber \\
       &-\alpha_{n,4}\langle \hat{u}, \nabla f(k_n)\rangle +\alpha_{n,1}f^*(\nabla f(u))+\alpha_{n,2}f^*(\nabla f(x_n))\nonumber\\
       &+\alpha_{n,3}f^*(\nabla f(u_n))+\alpha_{n,4}f^*(\nabla f(k_n))\nonumber\\
       &- \alpha_{n,2}\alpha_{n,3}\rho^*_{r_3}(\|\nabla f(x_n)-\nabla f(u_n)\|)\nonumber\\
       = &\alpha_{n,1}V_f( \hat{u}, \nabla f(u))+ \alpha_{n,2}V_f( \hat{u}, \nabla f(x_n))+\alpha_{n,3}V_f(\hat{u}, \nabla f(u_n))\nonumber\\
       &+\alpha_{n,4}V_f(\hat{u}, \nabla f(k_n))- \alpha_{n,2}\alpha_{n,3}\rho^*_{r_3}(\|\nabla f(x_n)-\nabla f(u_n)\|)\nonumber\\
       =&\alpha_{n,1}D_f( \hat{u}, u)+\alpha_{n,2}D_f( \hat{u}, x_n)+\alpha_{n,3}D_f(\hat{u}, u_n)+\alpha_{n,4}D_f(\hat{u}, k_n)\nonumber\\
       &- \gamma_{n,2}\gamma_{n,3}\rho_{r_1}^{*}(\|\nabla f(z_n) -\nabla f(Sz_n)\|)\nonumber\\
  \leq&\alpha_{n,1}D_f(\hat{u}, u)+(1-\alpha_{n,1})D_f(\hat{u}, x_n)\nonumber\\
  &- \alpha_{n,2}\alpha_{n,3}\rho^*_{r_3}(\|\nabla f(x_n)-\nabla f(u_n)\|).
\end{align}

Hence, it follows from the condition (i) and a similar technique as \eqref{tnstn1} that
\begin{equation}\label{unnorm}
  \displaystyle\lim_{n\rightarrow\infty}\|x_n-u_n\|=0.
\end{equation}
Note $\{x_n\}$, $\{z_n\}$ are bounded and $\nabla f$ is bounded on bounded subsets of $X$ and also
$\nabla f^*$ is bounded on bounded subsets of $X^*$. Then consequently $\{w_n\}$ is bounded. In a similar way, it follows that $\{k_n\}$ and $\{h_n\}$ are bounded.

From Lemma \ref{d_g}, the inequalities \eqref{Df},  \eqref{tnstn1} and \eqref{wntwn}, we have
\begin{equation}\label{tnwni}
          \displaystyle\lim_{n\rightarrow\infty}D_f(z_n, v_n)=0, \displaystyle\lim_{n\rightarrow\infty}D_f(z_n, Sz_n)=0, \displaystyle\lim_{n\rightarrow\infty}D_f(w_n, Sw_n)=0,
         \end{equation}
 also from \eqref{df<df}, we have
         \begin{equation*}\label{n}
            D_f(z_n, w_n)\leq \gamma_{n,1} D_f(z_n, v_n)+\gamma_{n,2} D_f(z_n, z_n)+\gamma_{n,3} D_f(z_n, Sz_n),
         \end{equation*}
so from \eqref{tnwni}, we obtain that
         \begin{equation*}\label{tnwn}
           \displaystyle\lim_{n\rightarrow\infty}D_f(z_n, w_n)=0.
         \end{equation*}
 Now, from Lemma \ref{2.2}, we conclude that
         \begin{equation}\label{tnwn1}
           \displaystyle\lim_{n\rightarrow\infty}\|z_n-w_n\|=0.
         \end{equation}
 By Lemma \ref{d_g}, the inequalities \eqref{total}, \eqref{unnorm} and \eqref{tnwn1}, we obtain that
 \begin{equation}\label{27}
  \displaystyle\lim_{n\rightarrow\infty} D_f(w_n, x_n)=0,\;\;\;\displaystyle\lim_{n\rightarrow\infty}D_f(w_n, u_n)=0.
 \end{equation}
Then, as the above, we have
\begin{equation*}\label{50}
   \displaystyle\lim_{n\rightarrow\infty}\|u_n-w_n\|=0.
\end{equation*}
 Therefore, from Lemma \ref{2.1}, it is implied that
\begin{equation}\label{50}
   \displaystyle\lim_{n\rightarrow\infty}\|\nabla f(u_n)-\nabla f(w_n)\|=0.
\end{equation}
 From \eqref{unnorm} and $x_{n_m}\rightharpoonup q$, we obtain that $u_{n_m}\rightharpoonup q$.
Since $u_n=Res^f_\varphi w_n$, then we have
\begin{equation*}
 \varphi(u_n, y) + \langle \nabla f(u_n)-\nabla f(w_n), y-u_n\rangle\geq 0,\;\;\; \forall\; y\in C.
\end{equation*}
Now, substituting $n_m$  instead of $n$ in the above, we conclude that
\begin{equation*}
 \varphi(u_{n_m}, y) + \langle \nabla f(u_{n_m})-\nabla f(w_{n_m}), y-u_{n_m}\rangle\geq 0,\;\;\; \forall\; y\in C.
\end{equation*}
By (B2), we obtain
\begin{equation}\label{29}
 \langle \nabla f(u_{n_m})-\nabla f(w_{n_m}), y-u_{n_m}\rangle\geq -\varphi(u_{n_m}, y)\geq \varphi(y, u_{n_m}),\;\;\; \forall\; y\in C.
\end{equation}
Now, we know from (B4) that $\varphi(x, .)$ is convex and lower semicontinuous. Then from \cite[Propositions 1.9.13 and 2.5.2]{arpod}, it is also weakly lower semicontinuous. Hence, letting $m\rightarrow\infty$, we conclude from \eqref{50} and \eqref{29}  that $\varphi(y, q)\leq 0$ for all $y\in C$. Then from \eqref{EP}, $q\in EP(\varphi)$. So by the condition (iii) of Lemma \ref{res3}, we have that $q\in F(Res^f_\varphi)$.
Therefore $q\in \Omega$.

It follows from \eqref{m}, \eqref{tnwni}, \eqref{27}, the boundedness of $\{ D_f(w_n, u)\}$ and the conditions (i), (iv) that
  $\lim_{n\rightarrow\infty}D_f(w_n, h_n)=0$.
Hence, from Lemma \ref{2.2} and the boundedness of $\{h_n\}$, we have
       \begin{equation*}\label{wnmn1}
         \displaystyle\lim_{n\rightarrow\infty}\|w_n-h_n\|=0.
       \end{equation*}
Then from \eqref{total}, \eqref{tnwn1} and the above, we obtain
   \begin{equation}\label{xnmn}
     \displaystyle\lim_{n\rightarrow\infty}\|x_n-h_n\|=0.
   \end{equation}
Now, we show that $\limsup_{n\rightarrow\infty}\langle h_n -\hat{u}, \nabla f( u)- \nabla f(\hat{u})\rangle\leq 0$. Since $\{x_n\}$ is bounded, we conclude that there exists a subsequence $\{x_{n_k}\}$ of $\{x_n\}$ which
\begin{equation*}\label{converg}
  \limsup_{n\rightarrow\infty}\langle x_n -\hat{u}, \nabla f(u)- \nabla f(\hat{u})\rangle  = \lim_{k\rightarrow\infty}\langle x_{n_k} -\hat{u}, \nabla f(u)- \nabla f(\hat{u})\rangle.
\end{equation*}
Since $\{x_{n_k}\}$ is bounded then there exists  a subsequence $\{x_{n_{k_{i}}}\}$ of $\{x_{n_k}\}$ that
converges weakly to some $\kappa\in \Omega$. Without loss of generality, we can assume that $x_{n_k}\rightharpoonup\kappa$.
Then from Lemma \ref{2.5}, we have that
\begin{equation}\label{converg}
  \limsup_{n\rightarrow\infty}\langle x_n -\hat{u}, \nabla f(u)- \nabla f(\hat{u})\rangle
  =  \langle \kappa -\hat{u}, \nabla f( u)- \nabla f(\hat{u})\rangle\leq 0.
\end{equation}
Next, It follows from \eqref{xnmn} and \eqref{converg} that
\begin{equation}\label{converg1}
  \limsup_{n\rightarrow\infty}\langle h_n -\hat{u}, \nabla f(u)- \nabla f(\hat{u})\rangle  = \limsup_{n\rightarrow\infty}\langle x_n -\hat{u}, \nabla f(u)- \nabla f(\hat{u})\rangle\leq 0.
\end{equation}
Therefore, from \eqref{u,xn+1}, \eqref{converg1} and Lemma \ref{an+1<an}, we have $\displaystyle\lim_{n\rightarrow\infty} D_f(\hat{u}, x_n)=0$. Therefore from Lemma \ref{2.2},
$x_n\rightarrow \hat{u}$ as $n\rightarrow\infty$.
\section*{Case2}
There exists a subsequence $\{n_j\}$ of $\{n\}$, such that
\begin{equation*}
  D_f (\hat{u}, x_{n_j}) < D_f (\hat{u}, x_{n_j+1}),
\end{equation*}
for all $j\in \mathbb{N}$. Hence, from Lemma \ref{2.18}, there exists a subsequence $\{m_k\} \subset \mathbb{N}$, such that $m_k \rightarrow \infty$  and the following properties are satisfied:
\begin{equation}\label{incres}
   D_f (\hat{u}, x_{m_k})\leq  D_f (\hat{u}, x_{m_{k}+1})\;\; \text{and} \;\; D_f (\hat{u}, x_k)\leq  D_f (\hat{u}, x_{m_{k}+1}),
\end{equation}
for all $k\in \mathbb{N}$.
From \eqref{l}, we have
         \begin{align*}
         (\alpha_{m_k,3}+\alpha_{m_k,4})&(\gamma_{m_k,2}+\gamma_{m_k,3})(1-\lambda_{m_k}c_1)D_f(y_{m_k}, x_{m_k}) \nonumber\\
           \leq& \alpha_{m_k,1} D_f(\hat{u}, u)+D_f( \hat{u}, x_{m_k})-D_f(\hat{u}, x_{{m_k}+1})\nonumber\\
          &-(\alpha_{m_k,3}+\alpha_{m_k,4})(\gamma_{m_k,2}+\gamma_{m_k,3})(1-\lambda_{m_k}c_2)D_f(z_{m_k}, y_{m_k}),
        \end{align*}
        for all $k\in \mathbb{N}$. Then using \eqref{incres}, the conditions (i), (ii) and (iii), it follows that $D_f(y_{m_k}, x_{m_k})\rightarrow 0$ as $k\rightarrow \infty$. So by Lemma \ref{2.2} and the boundedness of $\{x_{m_k}\}$, we obtain
      \begin{equation*}\label{ynxn1}
        \displaystyle\lim_{n\rightarrow\infty}\|y_{m_k} - x_{m_k}\|=0.
      \end{equation*}
      In a similar way, from \eqref{l}, we have
      \begin{equation*}\label{ynzntn1}
         \displaystyle\lim_{k\rightarrow\infty}\|y_{m_k} - z_{m_k}\|=0.
      \end{equation*}
Now proceeding with the same argument as in Case 1, we conclude that
\begin{align}\label{converg2}
  \limsup_{k\rightarrow\infty}\langle h_{m_k} -\hat{u},& \nabla f(u)- \nabla f(\hat{u})\rangle \nonumber\\
   &= \limsup_{k\rightarrow\infty}\langle x_{m_k} -\hat{u}, \nabla f(u)- \nabla f(\hat{u})\rangle\leq 0.
\end{align}
Also, from \eqref{u,xn+1} and \eqref{incres}, we obtain
\begin{align*}\label{u,xn+2}
   D_f(\hat{u}, x_{{m_k}+1}) & \nonumber \\
  \leq &(1- \alpha_{m_k,1})D_f(\hat{u}, x_{m_k})+\alpha_{m_k,1}\langle h_{m_k} -\hat{u}, \nabla f( u)- \nabla f(\hat{u})\rangle\nonumber\\
  \leq & (1- \alpha_{m_k,1})D_f(\hat{u}, x_{{m_k}+1})+\alpha_{m_k,1}\langle h_{m_k} -\hat{u}, \nabla f( u)- \nabla f(\hat{u})\rangle,
\end{align*}
hence,
\begin{equation*}\label{u,xn+2}
   D_f(\hat{u}, x_{{m_k}+1}) \leq\langle h_{m_k} -\hat{u}, \nabla f( u)- \nabla f(\hat{u})\rangle.
\end{equation*}
Then, it follows from \eqref{incres} that
\begin{equation}\label{uxk}
  D_f (\hat{u}, x_k)\leq  D_f (\hat{u}, x_{m_{k}+1}) \leq  \langle h_{m_k} -\hat{u}, \nabla f( u)- \nabla f(\hat{u})\rangle.
\end{equation}
From \eqref{converg2} and \eqref{uxk}, we conclude that
$\displaystyle\lim_{k\rightarrow\infty}D_f (\hat{u}, x_k) = 0$.
Therefore from Lemma \ref{2.2},
$x_k\rightarrow \hat{u}$ as $k\rightarrow\infty$. This completes the proof.
\end{proof}
\section{Applications and numerical examples}
The following theorem will be concluded from theorem \ref{maina}.
\begin{thm}\label{maina2}
Let $C$ be a nonempty closed convex subset of a real reflexive Banach
space $X$, and let $f : X \rightarrow \mathbb{R}$ be an admissible, strongly coercive Legendre function which
is bounded, uniformly Fr\'{e}chet differentiable and totally convex on bounded
subsets of $X$. Let  $g : C\times C \rightarrow \mathbb{R}$ be a bifunction satisfying
A1-A5. Assume that $S : C \rightarrow C$ is a Bregman nonexpansive mapping with $\hat{F}(S)=F(S)$. Let $\Omega =F(S)\cap EP(g) \neq\emptyset$. Suppose that $\{x_n\}$ is a sequence generated by $x_1\in C$, $u\in X$ and

    \begin{equation}\label{algo3}
    \left\{
    \begin{array}{lr}

       y_n=argmin\{\lambda_n g(x_n, y) +D_f(y, x_n):y\in C\},\\
       z_n= argmin\{\lambda_n g(y_n, y) +D_f(y, x_n):y\in C\},\\
      % t_n=argmin\{\lambda_n g(z_n, y) +D_f(y, x_n):y\in C\},\\
       %w_n=\nabla f^*(\gamma_{n,1}\nabla f(x_n)+\gamma_{n,2}\nabla f(z_n) +\gamma_{n,3}\nabla f(Sz_n)),\\
       v_n=\nabla f^*(\delta_n\nabla f(\overleftarrow{Proj}_C^f x_n)+(1-\delta_n)\nabla f(\overleftarrow{Proj}_C^f z_n)),\\
       w_n=\nabla f^*(\gamma_{n,1}\nabla f(v_n)+\gamma_{n,2}\nabla f(z_n) +\gamma_{n,3}\nabla f(Sz_n)),\\
       u_n=\overleftarrow{Proj}_C^fw_n,\\
   %    C_{n+1}=\{v\in C_n: D_f(v, u_n)\leq D_f(v, x_n)\},\\
 %      D_n=\{z\in C: \langle x_n-z, x_0 -x_n\rangle \geq 0\},\\
        k_n=\nabla f^*(\beta_n\nabla f(w_n)+(1-\beta_n)\nabla f(Sw_n)),\\
        h_n=\nabla f^* (\alpha_{n,1}\nabla f( u)+\alpha_{n,2}\nabla f(x_n)+\alpha_{n,3}\nabla f(u_n)+\alpha_{n,4}\nabla f(k_n)),\\
       x_{n+1}=\overleftarrow{Proj}_C^f h_n.
    \end{array} \right.
   \end{equation}
   where $\{\delta_n\}\subset(0, 1)$ and $\{\alpha_{n,i}\}^4_{i=1}$,  $\{\beta_n\}$ and $\{\lambda_n\}$ satisfy the following conditions:
     \begin{enumerate}
       \item [\emph{(i)}] $\{\alpha_{n,i}\}\subset (0, 1), \sum_{i=1}^{4}\alpha_{n,i}=1,  \lim_{n\rightarrow \infty}\alpha_{n,1}=0,$ $ \Sigma_{n=1}^{\infty}\alpha_{n,1}=\infty$, $\liminf_{n\rightarrow\infty} \alpha_{n,i}\alpha_{n,j}>0$ for all $i\neq j$ and $2\leq i,j\leq3$.
       \item [\emph{(ii)}] $\{\gamma_{n,i}\}\subset(0, 1)$, $\gamma_{n,1}+\gamma_{n,2}+\gamma_{n,3}=1$, $\liminf_{n\rightarrow\infty} \gamma_{n,i}\gamma_{n,j}>0$ for all $i\neq j$ and $1\leq i,j\leq3$.
       \item [\emph{(iii)}] $\{\lambda_n\}\subset [a, b]\subset (0, p)$, where $p=\min\{\frac{1}{c_1}, \frac{1}{c_2}\}$, $c_1, c_2$ are the Bregman-Lipschitz coefficients of $g$.
       \item [\emph{(iv)}] $\{\beta_n\}\subset (0, 1), \liminf_{n\rightarrow \infty}\beta_n(1-\beta_n)>0$.
     \end{enumerate}
     Then, the sequence $\{x_n\}$ converges strongly to $\overleftarrow{Proj}_{\Omega}^fu$.
\end{thm}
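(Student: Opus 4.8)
The plan is to deduce Theorem \ref{maina2} from Theorem \ref{maina} by specializing the second bifunction to $\varphi\equiv 0$. First I would set $\varphi(x,y):=0$ for all $x,y\in C$ and note that B1--B4 are immediate: $\varphi(x,x)=0$ gives B1; $\varphi(x,y)+\varphi(y,x)=0\le 0$ gives B2; $\limsup_{t\downarrow 0}\varphi(tz+(1-t)x,y)=0=\varphi(x,y)$ gives B3; and the constant map $y\mapsto 0$ is convex and lower semicontinuous, which is B4. Since $f$ is strongly coercive and Legendre, it is G\^{a}teaux differentiable with $\text{int}\,\text{dom}\,f=X$, so the discussion preceding Lemma \ref{res3} shows $\text{dom}\,Res^f_\varphi=X$; in particular every occurrence of $Res^f_\varphi$ in algorithm \eqref{algo3} of Theorem \ref{maina} is well defined on the iterates, which lie in $C\subset X$.

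The key identification is $Res^f_\varphi=\overleftarrow{Proj}_C^f$. Indeed, for $\varphi\equiv 0$ the defining relation \eqref{res2} becomes
\[
Res^f_\varphi x=\{z\in C:\langle y-z,\nabla f(z)-\nabla f(x)\rangle\ge 0\ \text{ for all }\ y\in C\},
\]
and by part (ii) of Lemma \ref{2.5} the unique element satisfying this variational inequality is $\overleftarrow{Proj}_C^f x$; combined with the single-valuedness asserted in Lemma \ref{res3}(i) this gives $Res^f_\varphi x=\overleftarrow{Proj}_C^f x$ for every $x\in X$. Since $f$ is Legendre, $D_f(y,x)=0$ forces $y=x$, so $\overleftarrow{Proj}_C^f$ is the identity on $C$, whence $F(Res^f_\varphi)=F(\overleftarrow{Proj}_C^f)=C$. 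Consequently
\[
F(S)\cap EP(g)\cap F(Res^f_\varphi)=F(S)\cap EP(g)\cap C=F(S)\cap EP(g)=\Omega,
\]
so the hypothesis $\Omega\ne\emptyset$ of Theorem \ref{maina2} is precisely what is needed to apply Theorem \ref{maina}.

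It then remains to observe that, under the substitution $\varphi\equiv 0$, the recursion \eqref{algo3} of Theorem \ref{maina} coincides term by term with the recursion \eqref{algo3} of Theorem \ref{maina2}: the definitions of $y_n,z_n,w_n,k_n,h_n,x_{n+1}$ are unchanged; the step for $v_n$ reads $v_n=\nabla f^*(\delta_n\nabla f(\overleftarrow{Proj}_C^f x_n)+(1-\delta_n)\nabla f(\overleftarrow{Proj}_C^f z_n))$ because $Res^f_\varphi=\overleftarrow{Proj}_C^f$; and the implicit condition $\varphi(u_n,y)+\langle\nabla f(u_n)-\nabla f(w_n),y-u_n\rangle\ge 0$ for all $y\in C$ says precisely that $u_n\in Res^f_\varphi w_n$, i.e. $u_n=\overleftarrow{Proj}_C^f w_n$ by single-valuedness. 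The parameter conditions (i)--(iv) are verbatim the same. Invoking Theorem \ref{maina} therefore yields $x_n\to\overleftarrow{Proj}_\Omega^f u$ strongly, which is the assertion of Theorem \ref{maina2}. The argument presents no real obstacle; the only step demanding attention is the verification that the resolvent of the zero bifunction is the Bregman projection onto $C$ and that its fixed point set is all of $C$, which is what makes the set $\Omega$ of Theorem \ref{maina} collapse to $F(S)\cap EP(g)$.
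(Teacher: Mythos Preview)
Your proposal is correct and follows exactly the paper's approach: specialize $\varphi\equiv 0$ in Theorem \ref{maina}, invoke Lemma \ref{2.5} to identify $Res^f_\varphi$ with $\overleftarrow{Proj}_C^f$ (so that $u_n=\overleftarrow{Proj}_C^f w_n$ and $F(Res^f_\varphi)=C$), and conclude. You have simply supplied more detail than the paper's one-line proof, including the explicit verification of B1--B4 and the term-by-term check that the two algorithms coincide.
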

\begin{proof}
Putting $\varphi(x, y)=0$ for all $x, y\in C$ in Theorem \ref{maina}, by Lemma \ref{2.5}, we have $Res^f_\varphi=\overleftarrow{Proj}_C^f$, $u_n=\overleftarrow{Proj}_C^fw_n$ and $F(Res^f_\varphi)=C$.
\end{proof}

 \begin{ex}\label{breg}
Let $X = \mathbb{R}$, $C = [0, 2]$, $f(.) = \frac{1}{2}\|.\|^2$ and define the bifunction $g: C\times C\rightarrow \mathbb{R}$ by
     \begin{equation*}
       g(x,y):=16y^2+9xy-25x^2,
     \end{equation*}
for all $x$, $y \in C$. Next, $g$ satisfies  the conditions A1 - A6 as follows:\\
 (A1) $g$ is monotone:
 \begin{equation*}
   g(x, y) + g(y, x)=16y^2+9xy-25x^2+16x^2+9xy-25y^2=-9(x-y)^2\leq 0,
 \end{equation*}
  for all $x, y\in C$.\\
(A2) $g$ is pseudomonotone, for all $x$, $y\in C$, because $g$ is  monotone.\\
(A3) $g$ is Bregman-Lipschitz-type continuous  with $c_1 = c_2 = 9$:
\begin{align}\label{aaaa}
  g(x, y) + g(y, z) - g(x, z) &=16y^2 + 9xy-25x^2 \nonumber\\
  &+ 16z^2 + 9yz -25y^2- 16z^2 -9xz +25 x^2 \nonumber\\
  = &-9(y^2-xy-yz+xz)\nonumber\\
  =&-\frac{9}{2}(y^2-2xy+x^2+y^2-2yz+z^2-x^2+2xz-z^2)\nonumber\\
  =&-9D_f(x, y)- 9D_f(y, z)+9D_f(x, z)\nonumber\\
   \geq& -9D_f(x, y)- 9D_f(y, z).
  \end{align}
  (A4) Note since $C \subseteq   \mathbb{R}$ is weakly compact we need only show that $g: C \times C \to  \mathbb{R}$ is weakly sequentially continuous. Let $x$, $y\in C$ and let  $\{x_n\}$ and $\{y_n\}$ be two sequences in C converging weakly to x and y, respectively (note the weak and strong convergence in $\mathbb{R}$ are equivalent), and then
  \begin{equation*}
   \displaystyle\lim_{n\rightarrow\infty}g(x_n,y_n)=\displaystyle\lim_{n\rightarrow\infty} 16y_{n}^2+9x_ny_n-25x_{n}^2=16y^2+9xy-25x^2=g(x,y).
  \end{equation*}
  (A5)  Note   $g(x, .)$ is convex, lower semicontinuous and subdifferentiable on $C$ for
every fixed $x \in C$.\\
(A6) Note
\begin{align}\label{A6}
  \displaystyle&\limsup_{t\rightarrow 0}g(tx + (1 - t)y, z) =\nonumber\\
  &\displaystyle\limsup_{t\rightarrow 0}[16z^2+9(tx + (1 - t)y)z-25(tx + (1 - t)y)^2]\nonumber  \\
  &=g(y, z),
  \end{align}
for each $x$, $y$, $z \in C$.
Define $S:C\rightarrow C$ by $S(x)=\frac{x}{3}$, for all $x\in C$. Hence, $F(S)=\{0\}$ and
\begin{align*}
  D_f(Sx, Sy)&=D_f(\frac{x}{3}, \frac{y}{3})\\
  =& f(\frac{x}{3})-f(\frac{y}{3}) - \langle \frac{x}{3}-\frac{y}{3}, \frac{y}{3}\rangle\\
  =&\frac{x^2}{18}-\frac{y^2}{18}-\frac{xy}{9}+\frac{y^2}{9}\\
  =&\frac{1}{18}(x-y)^2\leq\frac{1}{2}(x-y)^2= D_f(x, y),
\end{align*}for all $x, y \in C$. Therefore $S$ is  Bregman nonexpansive. Let $p\in \hat{F(S)}$ then $C$ contains a sequence $\{x_n\}$ such that $x_n\rightharpoonup p$ and $\displaystyle\lim_{n\rightarrow\infty}(Sx_n - x_n))=0$, then $p=0$. So $\hat{F}(S)=\{0\}=F(S)$. Now, define the bifunction $\varphi: C\times C\rightarrow \mathbb{R}$ by $\varphi(x, y)=0$ for all $x, y\in C$. It is clear that $\varphi$ satisfies the conditions $B1-B4$. By Lemma \ref{2.5}, we conclude that $Res^f_\varphi=Proj^f_C$.
 Now, if $\lambda_n=\frac{1}{32}$ by definition of $y_n$ in our algorithm, we have
\begin{equation*}
   y_n=argmin\{\frac{1}{32} g(x_n, y) +D_f(y, x_n):y\in C\},
\end{equation*}
therefore, $y_n=\frac{23}{64} x_n$. Similarly
\begin{equation*}
  z_n= argmin\{\frac{1}{32} g(y_n, y) +D_f(y, x_n):y\in C\},
\end{equation*}
hence, $z_n=\frac{1841}{(64)^2}x_n$. Also $v_n=\delta_nP_Cx_n+(1-\delta_n)P_Cz_n$, $w_n=\gamma_{n,1}v_n+\gamma_{n,2}z_n+\frac{1}{3}\gamma_{n,3}z_n$, $u_n=P_Cw_n$, $k_n=\beta_nw_n+(1-\beta_n)\frac{1}{3}w_n=(\frac{1}{3}+\frac{2}{3}\beta_n)w_n$, $h_n=\alpha_{n,1}u+\alpha_{n,2}x_n+\alpha_{n,3}u_n+\alpha_{n,4}k_n$  and $x_{n+1}=P_Ch_n$.
We choose $\alpha_{n,1}=\frac{1}{4n}$, $\alpha_{n,2}=\alpha_{n,3}=\alpha_{n,4}=\frac{1}{3}-\frac{3}{4n}$, $\beta_n=\frac{1}{2}+\frac{1}{n}$, $\delta_n=\frac{1}{2}$ and $\gamma_{n,1}=\gamma_{n,2}=\gamma_{n,3}=\frac{1}{3}$. See the table \ref{tableexample1} and Figure \ref{pp1} with the initial point $x_1 = 5$ of the sequence
$\{x_n\}$.
 \end{ex}

\end{document}